\title{On n-Hausdorff homogeneous and n-Urysohn homogeneous spaces}
\author{M. Bonanzinga\footnote{MIFT Department, University of Messina, Italy, mbonanzinga@unime.it .}, N. Carlson\footnote{Department of Mathematics, California Lutheran University, USA,  ncarlson@callutheran.edu .}, D. Giacopello\footnote{MIFT Department, University of Messina, Italy, dagiacopello@unime.it .}, F. Maesano\footnote{MIFT Department, University of Messina, Italy, fomaesano@unime.it .}}
\date{}
\newtheorem{theorem}{Theorem}[section]
\newtheorem{question}[theorem]{Question}
\newtheorem{example}[theorem]{Example}
\newtheorem{lemma}[theorem]{Lemma}
\newtheorem{prop}[theorem]{Proposition}
\newtheorem{definition}[theorem]{Definition}
\newtheorem{remark}[theorem]{Remark}
\begin{document}
\maketitle
\begin{abstract}
In this paper we study $n$-Hausdorff homogeneous and $n$-Urysohn homogeneous spaces. We give some upper bounds for the cardinality of these kind of spaces and give examples. Additionally we show that for every $n>2$, there is no $n$-Hausdorff 2-homogeneous space. Finally, for any $n$-Hausdorff space we construct an $n$-Hausdorff homogeneous extension which is the union of countably many $n$-H-closed spaces. 
\end{abstract}

{\bf Keywords:} $n$-Hausdorff spaces, $n$-Urysohn spaces, homogeneous extensions, $n$-Katetov extensions.

{\bf AMS Subject Classification:} 54A25, 54D10, 54D20, 54D35, 54D80.

\section{Introduction}

Throughout the paper, $n$ will always denote an integer. Given a topological space $X$, 
the \emph{Hausdorff number} $H(X)$ (finite or infinite) of $X$ is the least cardinal number $\kappa$ such that for every subset $A\subseteq X$ with $|A| \geq \kappa$ there exist open neighbourhoods $U_{a}$, $a \in A$, such that $\bigcap_{a \in A} U_{a} = \emptyset$. A space $X$ is  said \emph{n-Hausdorff}, $n \geq 2$, if $H(X) \leq n$. Of course,  with $|X|\geq 2$, $X$ is Hausdorff iff $n=2$ \cite{B}; 
the \emph{Urysohn number} $U(X)$ (finite or infinite) of $X$ is the least cardinal number $\kappa$ such that for every subset $A\subseteq X$ with $|A| \geq \kappa$ there exist open neighbourhoods $U_{a}$, $a \in A$, such that $\bigcap_{a \in A} \overline{U_{a}} = \emptyset$. A space $X$ is  said \emph{n-Urysohn}, $n \geq 2$, if $U(X) \leq n$. Of course,  with $|X|\geq 2$, $X$ is Urysohn iff $n=2$ (see \cite{BCM1,BCM2}).\\ 

A space $X$ is \textit{homogeneous} if for every $x, y \in X$ there exists a homeomorphism $h : X \rightarrow X$ such that $h(x) = y$  (see \cite{AM, CSurvey} for surveys on homogeneous spaces). 

\begin{definition} \rm \cite{CPR2}
	A space $X$ is $2$-homogeneous if for every $x_1,x_2,y_1,y_2\in X$ there exists a homeomorphism $h : X \rightarrow X$ such that $h(x_1) = y_1$ and $h(x_2) = y_2$.
\end{definition}
In general one can give the definition of $n$-homogeneous space for any $n$. Notice that 1-homogeneity coincides with the definition of homogeneity. Of course, if a space is $(n+1)$-homogeneous, then it is $n$-homogeneous for every $n\geq 1$.\\

In this paper we prove that $n$-Hausdorff ($n>2$) non Hausdorff spaces are not $m$-homogeneous ($m>1$) and give an example (Example \ref{bing}) of a 3-Urysohn homogeneous non Urysohn space. Also we show that  even in the class of homogeneous spaces $(n+1)$-Hausdorff  ($(n+1)$-Urysohn) spaces need not be $n$-Hausdorff (resp., $n$-Urysohn), with $n \geq 2$. Also we present some upper bounds on the cardinality of $n$-Hausdorff homogeneous and $n$-Urysohn homogeneous spaces (see also \cite{B,BCG} for other bounds on the cardinality of n-Hausdorff and n-Urysohn spaces). In particular, we prove the analogous version of the following result for $n$-Urysohn spaces and a variation of the same result for $n$-Hausdorff spaces.

\begin{theorem}\rm\cite{CR} \label{CR}
Let $X$ be a homogeneous Hausdorff space. Then $|X| \leq 2^{c(X) \pi \chi (X)}$.
\end{theorem}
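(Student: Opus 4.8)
The plan is to reduce the inequality to two lemmas about homogeneous Hausdorff spaces and then combine them by cardinal arithmetic. Write $\kappa = c(X)\pi\chi(X)$; we may assume both $c(X)$ and $\pi\chi(X)$ are infinite, since otherwise $X$ is finite or discrete and the bound is immediate. The two ingredients are: \emph{(L1)} if $X$ is homogeneous and Hausdorff then $|X| \le d(X)^{\pi\chi(X)}$; and \emph{(L2)} if $X$ is homogeneous and Hausdorff then $d(X) \le 2^{\kappa}$. Granting these, $|X| \le d(X)^{\pi\chi(X)} \le \bigl(2^{\kappa}\bigr)^{\pi\chi(X)} = 2^{\kappa\cdot\pi\chi(X)} = 2^{\kappa}$, which is the theorem. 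Note that $\pi\chi$ is genuinely needed here: $d(X) \le 2^{c(X)}$ already fails for homogeneous spaces such as $\{0,1\}^{2^{\mathfrak c^+}}$, so (L2) must carry the $\pi$-character.

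The proof of (L1) is where homogeneity does the essential work: it lets one fixed local $\pi$-base be used everywhere. Fix $x_0 \in X$, a local $\pi$-base $\mathcal V$ at $x_0$ with $|\mathcal V| \le \pi\chi(X)$, a dense set $D$ with $|D| = d(X)$, and for each $x \in X$ a homeomorphism $h_x$ of $X$ onto itself with $h_x(x_0) = x$, so that $h_x[\mathcal V]$ is a local $\pi$-base at $x$. One then attaches to each $x$ its ``$\mathcal V$-type over $D$'': enough combinatorial information about the traces $h_x[V] \cap D$, $V \in \mathcal V$, to recover $x$, landing after a careful coding in a set of cardinality $d(X)^{\pi\chi(X)}$. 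Injectivity of this assignment is obtained by separating two distinct points $x \ne y$ by disjoint open sets and using that $\mathcal V$, transported to $x$ and to $y$, $\pi$-approximates each, so the two types must disagree on a common index $V \in \mathcal V$. This is essentially a lemma of Ridderbos; the point of the careful coding is to bring the codomain down to $d(X)^{\pi\chi(X)}$, since the crude estimate $2^{d(X)\pi\chi(X)}$ is too weak to finish.

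For (L2) the plan is a Hajnal--Juh\'asz-type closing-off, again fed by the single transported $\pi$-base. Build an increasing chain $\langle D_\alpha : \alpha < \kappa^+\rangle$ of subsets of $X$, each of size $\le 2^{\kappa}$, closing off so that whenever $\mathcal U$ is a subfamily of size $\le \kappa$ of $\bigcup_{x \in D_\alpha}\{h_x[V] : V \in \mathcal V\}$ with $\overline{\bigcup\mathcal U} \ne X$, some point of $X \setminus \overline{\bigcup\mathcal U}$ lies in $D_{\alpha+1}$; at each stage there are only $2^{\kappa}$ such $\mathcal U$, so the sizes are preserved and $D := \bigcup_\alpha D_\alpha$ has size $\le 2^{\kappa}$. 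To see that $D$ is dense, suppose not, fix $p$ in the nonempty open set $X \setminus \overline{D}$ and a local $\pi$-base $\{W_\beta : \beta < \pi\chi(X)\}$ at $p$ refined inside $X \setminus \overline{D}$; for each $x \in D$ use Hausdorffness to pick $\beta(x)$ with $x \notin \overline{W_{\beta(x)}}$ and transported $\pi$-base elements at $x$ missing $\overline{W_{\beta(x)}}$; partition $D$ according to $\beta(x)$, in each cell take a maximal pairwise disjoint subfamily of the corresponding $\pi$-base elements (of size $\le c(X) \le \kappa$), and observe that the union of these $\le \kappa\cdot\pi\chi(X) = \kappa$ open sets has $D$ in its closure yet avoids a fixed nonempty open set, contradicting the closing-off. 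The gain over the classical Hajnal--Juh\'asz bound $|X| \le 2^{c(X)\chi(X)}$ is exactly that everything runs off a $\pi$-base of size $\le \pi\chi(X)$ rather than off honest neighbourhood bases of size $\le \chi(X)$.

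The main obstacle, shared by both lemmas, is the combination of two weaknesses: $\pi$-base elements need not contain the point they approximate, and we have only Hausdorffness, not regularity. Consequently the steps that in the classical $\chi$-arguments use ``$x \in U_x$'' — to know that a maximal pairwise disjoint subfamily actually meets the set it should cover, and to force two distinct points to receive different types on a common coordinate — have no direct analogue, and passing to closures is not free. Homogeneity restores enough rigidity by equipping every point with a homeomorphic copy of one fixed $\pi$-base; the delicate part is making the disjointification and the Hausdorff separation of $p$ (respectively of $x$ from $y$) cooperate with this transported structure while going through the cellularity bound only once, so that the exponent stays $\pi\chi(X)$ and not something larger.
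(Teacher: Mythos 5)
This statement is quoted from \cite{CR} and the paper gives no proof of it, but the method of \cite{CR} is reproduced essentially verbatim in the paper's own bound for $3$-Hausdorff homogeneous spaces (the theorem following Theorem \ref{ER}): set $\kappa=c(X)\pi\chi(X)$, suppose $|X|\geq(2^{\kappa})^{+}$, separate each pair $x\neq y$ by disjoint open sets $U_x, U_y$, transport one fixed local $\pi$-base $\mathcal{B}$ at $p$ by homeomorphisms $h_x$ with $h_x(p)=x$, colour the pair $\{x,y\}$ by some $B\in\mathcal{B}$ with $B\subseteq h_x^{\leftarrow}(U_x)\cap h_y^{\leftarrow}(U_y)$, extract by Erd\H{o}s--Rado a monochromatic $Z$ of size $\kappa^{+}$, and note that $\{h_z(B):z\in Z\}$ is then a pairwise disjoint family of $\kappa^{+}$ nonempty open sets, contradicting $c(X)\leq\kappa$ via Proposition \ref{P1}. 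Your route is genuinely different: you factor the bound as $|X|\leq d(X)^{\pi\chi(X)}$ (your (L1), which is legitimate --- it is the $n=2$ case of Theorem \ref{nHdensity} and due to Ridderbos) together with a new lemma (L2), $d(X)\leq 2^{c(X)\pi\chi(X)}$ for homogeneous Hausdorff spaces.

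The gap is in (L2), and it is exactly where you yourself flag trouble. Your closing-off argument needs two things at the final step and justifies neither. First, that $D$ lies in the closure of the union of the selected maximal cellular subfamilies: maximality of $\mathcal{C}_\beta$ inside $\{h_x(V_x):x\in D,\ \beta(x)=\beta\}$ only yields that each $h_x(V_x)$ meets $\bigcup\mathcal{C}_\beta$, and since $x$ need not belong to $h_x(V_x)$ (it is only a $\pi$-base element), this says nothing about whether $x\in\overline{\bigcup\mathcal{C}_\beta}$. Second, that the total union avoids a fixed nonempty open set: each $\bigcup\mathcal{C}_\beta$ is disjoint from $\overline{W_\beta}$, but different cells avoid different $W_\beta$, and nothing prevents $\bigcup\mathcal{C}_{\beta'}$ from meeting, or even covering, $W_\beta$ for $\beta'\neq\beta$, so no single witness of non-density of $\bigcup_\beta\bigcup\mathcal{C}_\beta$ is produced. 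Without one of these the closing-off is never contradicted. This is precisely why the paper's own analogues (Theorems \ref{Th53} and \ref{THU}, and the quasiregular case) route through the $\theta$-density: Carlson's bound $d_\theta(X)\leq\pi\chi(X)^{c(X)}$ (Theorem \ref{Carlson}) holds for arbitrary spaces because the $\theta$-closure is defined via closures of neighbourhoods, which are exactly what a cellular family controls; the bridge from $d_\theta$ back to $d$ then costs quasiregularity or the Urysohn property. For plain density in a merely Hausdorff homogeneous space, the only derivation of (L2) I know is $d(X)\leq|X|\leq 2^{c(X)\pi\chi(X)}$, i.e.\ as a corollary of the theorem you are proving. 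I would abandon the factorization and run the Erd\H{o}s--Rado argument directly; it is shorter and is the argument your (L1) citation comes packaged with in \cite{CR}.
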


In the last section of the paper, for any $n \geq 2$ and for any $n$-Hausdorff space, we construct an $n$-Hausdorff homogenous extension which is the countable union of $n$-H-closed spaces. Using this result we give an example of $n$-Hausdorff homogeneous space which is not $n$-Urysohn, for every $n \geq 2$.

We consider cardinal invariants of topological spaces (see \cite{E, H, J}) and all
cardinal functions are multiplied by $\omega$. In particular, given a topological space $X$, we will denote with $d(X)$ its density, $\chi(X)$ its character, $\pi \chi(X)$ its $\pi$-character, $\pi w(X)$ its $\pi$-weight and $c(X)$ its cellularity. Recall also that, for any space $X$, $d(X) \pi \chi (X) = \pi w (X)$.
\smallskip

Recall that a family $\mathcal{U}$ of open sets of a space $X$ is \emph{point-finite} if for every $x \in X$, the set $\{ U \in  {\cal U} : x \in U\}$ is finite \cite{E}. Tkachuck \cite{T} defined $p(X) = sup\{|\mathcal{U}|: \mathcal{U}$ is a point-finite family in $X \}$. 
In \cite{B}, Bonanzinga introduced the following definition:

\begin{definition}\rm\cite{B}
A family $\mathcal{U}$ of open sets of a space $X$ is \emph{point-($\leq n$) finite}, where $n \in \mathbb{N}$, if for every $x \in X$, the set $\{ U \in  {\cal U} : x \in U\}$ has cardinality $\leq n$. For each $n \in \mathbb{N}$, put 
\begin{center}
$p_{n}(X) = sup\{ |\mathcal{U}|: \mathcal{U}$ is a point-($\leq n$) finite family in $X \}.$
\end{center}
\end{definition}

\begin{prop}\rm\cite{B}
\label{P1}
Let $X$ be a topological space. Then $p_{n}(X)=c(X)$ for every $n \in \mathbb{N}$.
\end{prop}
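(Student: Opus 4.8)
The plan is to prove the two inequalities $c(X)\le p_n(X)$ and $p_n(X)\le c(X)$ separately, the second by induction on $n$ carried out simultaneously for all spaces. The lower bound is immediate: any pairwise disjoint family of nonempty open sets is point-($\le 1$) finite, hence point-($\le n$) finite for every $n\ge 1$, so every cellular family of $X$ witnesses a value $\le p_n(X)$ and, taking the supremum, $c(X)\le p_n(X)$. For $n=1$ the reverse inequality is also immediate, since a family of nonempty open sets is point-($\le 1$) finite precisely when its members are pairwise disjoint, so $p_1(X)=c(X)$ straight from the definitions (a possible empty member is harmless, as all cardinal functions are multiplied by $\omega$).

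For the inductive step, assume $n\ge 2$ and $p_{n-1}(Y)=c(Y)$ for every space $Y$. Let $\mathcal U$ be a point-($\le n$) finite family of open sets of $X$; discarding $\emptyset$ we may assume every member is nonempty, and we must show $|\mathcal U|\le c(X)$. First choose a maximal pairwise disjoint subfamily $\mathcal V\subseteq\mathcal U$, so $|\mathcal V|\le c(X)$; by maximality every $U\in\mathcal U$ meets some $V\in\mathcal V$, hence $\mathcal U=\bigcup_{V\in\mathcal V}\mathcal W_V$ where $\mathcal W_V=\{U\in\mathcal U:U\cap V\ne\emptyset\}$. It thus suffices to show $|\mathcal W_V|\le c(X)$ for each fixed $V\in\mathcal V$, for then $|\mathcal U|\le|\mathcal V|\cdot\sup_{V\in\mathcal V}|\mathcal W_V|\le c(X)\cdot c(X)=c(X)$.

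To bound $|\mathcal W_V|$ I would pass to the open subspace $V$ and look at the map $U\mapsto U\cap V$ on $\mathcal W_V\setminus\{V\}$. Its range is a family of nonempty open subsets of $V$ that is point-($\le n-1$) finite: if distinct members $U_1\cap V,\dots,U_m\cap V$ of the range all contain a point $x$, then $U_1,\dots,U_m,V$ are $m+1$ distinct members of $\mathcal U$ containing $x$, so $m\le n-1$ by point-($\le n$) finiteness of $\mathcal U$. Applying the inductive hypothesis to $V$, together with $c(V)\le c(X)$ (valid because $V$ is open in $X$, so open families of $V$ are open families of $X$), the range has cardinality at most $p_{n-1}(V)=c(V)\le c(X)$. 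The same idea controls the fibers: if $U_1\cap V=\dots=U_k\cap V$ with the $U_i\in\mathcal W_V\setminus\{V\}$ distinct, then picking $x$ in this common nonempty set places $x$ in the $k+1$ distinct members $U_1,\dots,U_k,V$ of $\mathcal U$, so $k\le n-1$. Combining, $|\mathcal W_V\setminus\{V\}|\le(n-1)\cdot c(X)=c(X)$, hence $|\mathcal W_V|\le c(X)$, completing the induction.

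The only genuinely delicate point is the last step: the assignment $U\mapsto U\cap V$ need not be injective, so one cannot simply read $|\mathcal W_V|$ off the cellularity of $V$; this is exactly what the uniform bound $n-1$ on fiber sizes repairs, and that bound, like the drop from $n$ to $n-1$ in the multiplicity of the restricted family, comes from the single set $V$ itself occupying one of the at most $n$ ``slots'' available at each of its points. Everything else—the reduction to the pieces $\mathcal W_V$ via a maximal cellular family, and the monotonicity $c(V)\le c(X)$ for open $V$ that allows the inductive hypothesis to be invoked on subspaces—is routine.
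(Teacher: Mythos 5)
Your proof is correct: the lower bound is the trivial one, and the induction on $n$ works — the key points (distinct traces $U\cap V$ come from distinct $U$'s, the set $V$ itself uses up one of the $n$ ``slots'' at each of its points so both the multiplicity of the restricted family and the fiber sizes drop to $n-1$, and $c(V)\le c(X)$ for open $V$) are all justified, and the final count $|\mathcal U|\le|\mathcal V|\cdot(n-1)\cdot c(X)=c(X)$ is fine since cardinal functions are assumed infinite. The paper itself gives no argument for this proposition — it is quoted from \cite{B} — and the route suggested by the surrounding text is different from yours: one observes that every point-($\leq n$) finite family is in particular point-finite, so $c(X)\le p_n(X)\le p(X)$, and then invokes Tkachuk's theorem that $p(X)=c(X)$, i.e.\ the classical fact that a point-finite family of nonempty open sets has size at most the cellularity. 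That route is shorter but leans on a nontrivial external theorem whose proof (for unbounded multiplicities) is more delicate; your induction is self-contained and genuinely more elementary, exploiting the uniform bound $n$ on multiplicities, at the price of proving only the bounded-multiplicity case — which is exactly what the proposition needs. Your maximal-cellular-family decomposition into the pieces $\mathcal W_V$ is essentially the standard disjointification idea, so the two approaches differ mainly in whether the finite-multiplicity bookkeeping is done by hand (your fiber argument) or delegated to $p(X)=c(X)$.
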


\section{Examples and positive results}

In \cite{B},  examples of $(n + 1)$-Hausdorff spaces which are not $n$-Hausdorff, for
every $n \geq 2$, and an example of a space $X$ such that $H(X) = \omega$ and $H(X)\neq n$, for each $n \geq 2$, are given. Also, in \cite{BCM1} examples of Hausdorff  $(n + 1)$-Urysohn spaces which are not $n$-Urysohn were given for every $n\geq 2$.\\

Recall that a hyperconnected (or nowhere Hausdorff) space is a space such that the intersection of two nonempty open sets is nonempty; a space is nowhere Urysohn if there is no pair of nonempty open sets with disjoint closures.\\
\smallskip

\begin{prop}\rm\label{yes hyper}
	A non Hausdorff $2$-homogeneous space is hyperconnected.
\end{prop}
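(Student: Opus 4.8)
The plan is to prove the contrapositive: if a non-Hausdorff space $X$ is not hyperconnected, then it is not $2$-homogeneous. So suppose $X$ is not Hausdorff and not hyperconnected. Non-hyperconnectedness gives us a pair of nonempty open sets $V_1, V_2$ with $V_1 \cap V_2 = \emptyset$. Non-Hausdorffness gives us two distinct points $a, b \in X$ that cannot be separated by disjoint open sets; equivalently, every open neighbourhood of $a$ meets every open neighbourhood of $b$. First I would pick points $x_1 \in V_1$ and $x_2 \in V_2$; these two points \emph{can} be separated by disjoint open neighbourhoods, namely $V_1$ and $V_2$ themselves.

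The key step is to turn this into a contradiction with $2$-homogeneity. If $X$ were $2$-homogeneous, there would be a homeomorphism $h : X \to X$ with $h(a) = x_1$ and $h(b) = x_2$. Since $h$ is a homeomorphism, it preserves the property ``these two points can be separated by disjoint open sets'': the pair $a, b$ would then be separated by $h^{-1}(V_1)$ and $h^{-1}(V_2)$, which are disjoint open neighbourhoods of $a$ and $b$ respectively. This contradicts the choice of $a$ and $b$ as a non-separated pair. Hence no such $h$ exists, and $X$ is not $2$-homogeneous. Taking the contrapositive yields the statement.

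I do not expect any serious obstacle here; the argument is essentially the observation that ``being a topologically separated pair'' is invariant under homeomorphism, combined with the fact that a non-Hausdorff space has some non-separated pair while a non-hyperconnected space has some separated pair, and $2$-homogeneity forces all pairs to look alike. The only mild subtlety worth stating carefully is the reduction to distinct points: in the non-Hausdorff witness we may take $a \neq b$ (the definition of Hausdorff concerns distinct points), and in $V_1 \cap V_2 = \emptyset$ with both nonempty, any chosen $x_1 \in V_1$, $x_2 \in V_2$ are automatically distinct, so the homomorphism $h$ with $h(a)=x_1$, $h(b)=x_2$ is legitimately an instance of the $2$-homogeneity hypothesis applied to four points (here $y_1 = x_1$, $y_2 = x_2$).
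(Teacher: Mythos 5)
Your argument is correct and is essentially the paper's own proof: both take a non-separated pair (witnessing non-Hausdorffness), use $2$-homogeneity to map it onto points of the two disjoint open sets, and pull those sets back along the homeomorphism to get a contradiction. Phrasing it as a contrapositive rather than a proof by contradiction, and noting explicitly that the relevant pairs of points are distinct, are only cosmetic differences.
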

\begin{proof}
	Let $X$ be a non Hausdorff $2$-homogeneous space. Suppose that there are two nonempty open subset $V_1$ and $V_2$ of $X$ such that $V_1\cap V_2=\emptyset$. Fix two points $y_1\in V_1$ and $y_2\in V_2$. Since $X$ is not Hausdorff there exist two points $x_1,x_2\in X$ such that for every open neighbourhood $U_1$ of $x_1$ and $U_2$ of $x_2$, one has that $U_1\cap U_2\not=\emptyset$. Define the homeomorphism $h:X\to X$ such that $h(x_1)=y_1$ and $h(x_2)=y_2$. Of course $h^{\leftarrow}(V_1)\cap h^{\leftarrow}(V_2)\not=\emptyset$. Pick a point $x\in h^{\leftarrow}(V_1)\cap h^{\leftarrow}(V_2)$, then $h(x)\in V_1\cap V_2$, a contradiction.
\end{proof}

\begin{prop}\rm
	A non Urysohn $2$-homogeneous space is nowhere Urysohn.
\end{prop}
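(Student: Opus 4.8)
The plan is to mirror the proof of Proposition~\ref{yes hyper}, systematically replacing open sets by their closures. First I would record the reformulation of "not Urysohn" that makes the analogy exact: just as "not Hausdorff" was used above in the form "there exist $x_1,x_2$ such that $U_1\cap U_2\neq\emptyset$ for all neighbourhoods $U_i$ of $x_i$", so $X$ fails to be Urysohn exactly when there are two (necessarily distinct) points $x_1,x_2\in X$ such that $\overline{U_1}\cap\overline{U_2}\neq\emptyset$ for every open neighbourhood $U_1$ of $x_1$ and every open neighbourhood $U_2$ of $x_2$. This is simply the negation of the definition together with the standard observation that $U(X)\leq 2$ is equivalent to the possibility of Urysohn-separating every pair of points (extend a separating assignment on a pair to any larger set by assigning $X$ to the remaining points).

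Now let $X$ be a non-Urysohn $2$-homogeneous space and let $V_1,V_2$ be two nonempty open subsets of $X$; the goal is $\overline{V_1}\cap\overline{V_2}\neq\emptyset$. If $V_1\cap V_2\neq\emptyset$ this is immediate since $V_1\cap V_2\subseteq\overline{V_1}\cap\overline{V_2}$, so I may assume $V_1\cap V_2=\emptyset$ and pick $y_1\in V_1$, $y_2\in V_2$; these two points are then automatically distinct. Using the reformulation above, fix distinct points $x_1,x_2\in X$ witnessing the failure of the Urysohn property, and use $2$-homogeneity to obtain a homeomorphism $h:X\to X$ with $h(x_1)=y_1$ and $h(x_2)=y_2$.

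Then $h^{\leftarrow}(V_1)$ and $h^{\leftarrow}(V_2)$ are open neighbourhoods of $x_1$ and $x_2$ respectively, so by the choice of $x_1,x_2$ there is a point $x\in\overline{h^{\leftarrow}(V_1)}\cap\overline{h^{\leftarrow}(V_2)}$. Since $h$ is a homeomorphism it maps closures onto closures, and since $h$ is a bijection $h(h^{\leftarrow}(V_i))=V_i$; hence $h(x)\in h\bigl(\overline{h^{\leftarrow}(V_i)}\bigr)=\overline{h(h^{\leftarrow}(V_i))}=\overline{V_i}$ for $i=1,2$. Therefore $h(x)\in\overline{V_1}\cap\overline{V_2}$, so this intersection is nonempty, and $X$ is nowhere Urysohn. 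I do not anticipate a genuine obstacle; the only two points that need a little care are the reduction to the case $V_1\cap V_2=\emptyset$ (so that $y_1\neq y_2$, which is needed for the homeomorphism to exist since homeomorphisms are injective) and the elementary fact that a homeomorphism commutes with closure, which is precisely what transports the nonempty intersection of the closures of the preimages back to $\overline{V_1}\cap\overline{V_2}$.
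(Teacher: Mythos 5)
Your proof is correct and follows essentially the same route as the paper: the paper simply says the argument of Proposition~\ref{yes hyper} carries over once one uses that a homeomorphism satisfies $h(\overline{A})=\overline{h(A)}$, which is exactly the argument you give (your direct formulation with the trivial case $V_1\cap V_2\neq\emptyset$, versus the paper's proof by contradiction from a pair of open sets with disjoint closures, is only a cosmetic difference). Your extra care in extracting a non-Urysohn-separable \emph{pair} of distinct points from the failure of $U(X)\leq 2$ is a sensible explicit touch that the paper leaves implicit.
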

\begin{proof}
	The proof is similar to the one of Proposition \ref{yes hyper}. One just needs to consider that if $h:X\to X$ is a homeomorphism, then $h(\overline{A})=\overline{h(A)}$ for each $A\subseteq X$.
\end{proof}

The following proposition follows directly from the definition.
 
\begin{prop}\rm\label{hyper}
	A space $X$ is hyperconnected if and only if for every finite $A \subseteq X$, $|A|=n$, $n \geq 2$, and for every choice of neighbourhoods $U_{a}$, $a \in A$, $\bigcap_{a \in A} U_{a} \neq \emptyset$.
\end{prop}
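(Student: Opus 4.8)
The plan is to prove both implications directly from the definition of hyperconnectedness, the forward one by a short induction and the reverse one by specializing to two-element sets.

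For the forward implication, assume $X$ is hyperconnected. I would first record the standard fact that in a hyperconnected space every \emph{finite} intersection of nonempty open sets is nonempty: this follows by induction on the number $k$ of sets, the base case $k=2$ being precisely the definition, and the inductive step using that $U_1 \cap \cdots \cap U_{k-1}$ is again a nonempty open set, so its intersection with $U_k$ is nonempty. Now, given a finite $A \subseteq X$ with $|A| = n \geq 2$ together with a choice of neighbourhoods $U_a$ of $a$ for each $a \in A$, each $U_a$ is a nonempty open set (it contains $a$), so $\bigcap_{a \in A} U_a \neq \emptyset$ by the fact just established.

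For the reverse implication, assume the stated property holds; it is enough to invoke the case $n = 2$. Let $V_1, V_2$ be nonempty open subsets of $X$; if $V_1 \cap V_2 \neq \emptyset$ there is nothing to prove, so suppose toward a contradiction that $V_1 \cap V_2 = \emptyset$. Choose $a_1 \in V_1$ and $a_2 \in V_2$; since $V_1 \cap V_2 = \emptyset$ we have $a_1 \neq a_2$, so $A = \{a_1, a_2\}$ has cardinality $2$. Taking $U_{a_1} = V_1$ and $U_{a_2} = V_2$ as neighbourhoods of $a_1$ and $a_2$, the hypothesis yields $\emptyset \neq \bigcap_{a \in A} U_a = V_1 \cap V_2$, a contradiction. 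Hence any two nonempty open sets meet, i.e.\ $X$ is hyperconnected.

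There is essentially no obstacle in this argument; the only point deserving a moment's care is ensuring that the two chosen points are distinct so that $|A| = 2$, which is automatic once we have reduced to the case $V_1 \cap V_2 = \emptyset$.
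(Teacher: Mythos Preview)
Your argument is correct. The paper does not actually give a proof of this proposition; it simply remarks that it ``follows directly from the definition,'' and your write-up is exactly the natural unpacking of that remark---induction on the number of open sets for one direction and specialization to $n=2$ for the other.
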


By Proposition \ref{hyper} one can easily show the following.

\begin{prop}\rm\label{no Hyper}
	Let $n\geq 2$. Any $n$-Hausdorff space is not hyperconnected.
\end{prop}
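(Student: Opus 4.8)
The plan is to prove the contrapositive: if $X$ is hyperconnected, then $X$ is not $n$-Hausdorff for any $n \geq 2$. Recall that $X$ is $n$-Hausdorff means $H(X) \leq n$, i.e. every subset $A$ with $|A| \geq n$ admits neighbourhoods $U_a$, $a \in A$, with $\bigcap_{a \in A} U_a = \emptyset$. So to show $X$ is \emph{not} $n$-Hausdorff it suffices to exhibit one subset $A$ with $|A| \geq n$ such that every choice of neighbourhoods has nonempty intersection.

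First I would dispose of a degenerate case: if $|X| < n$ then there is no subset $A$ with $|A| \geq n$ at all, so the condition $H(X) \leq n$ holds vacuously. However, a hyperconnected space in the usual sense is nonempty, and if $|X| = 1$ the space is trivially Hausdorff, so to get a genuine statement one should assume $|X| \geq n$ (or at least $|X| \geq 2$ and interpret the claim appropriately — I would add the hypothesis $|X|\ge n$, or note that a hyperconnected space relevant here is assumed infinite/large enough). Granting $|X| \geq n$, pick any subset $A \subseteq X$ with $|A| = n \geq 2$. By Proposition \ref{hyper}, since $X$ is hyperconnected, for this finite set $A$ and for \emph{every} choice of neighbourhoods $U_a$, $a \in A$, we have $\bigcap_{a \in A} U_a \neq \emptyset$. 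Hence the defining condition for $H(X) \leq n$ fails at this particular $A$, so $H(X) > n$, and therefore $X$ is not $n$-Hausdorff.

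The argument is essentially immediate once Proposition \ref{hyper} is in hand, since that proposition is precisely the translation of hyperconnectedness into the language of finite intersections of arbitrary neighbourhood assignments, which is the exact negation (for a fixed $A$) of what $n$-Hausdorffness requires. The only subtlety — and the one place a referee might object — is the cardinality caveat: one must ensure there exists a subset of size at least $n$ to witness the failure, which is why the implicit or explicit assumption $|X| \geq n$ (equivalently, excluding trivially small spaces) is needed. I expect this to be the sole genuine point to address; everything else is a one-line invocation of the preceding proposition.
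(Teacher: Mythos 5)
Your argument is exactly the paper's: the authors give no separate proof, remarking only that the proposition follows easily from Proposition \ref{hyper}, which is precisely your one-line contrapositive (hyperconnectedness gives a finite set $A$, $|A|=n$, at which every neighbourhood assignment has nonempty intersection, so $H(X)>n$). Your cardinality caveat --- that one needs $|X|\geq n$ to have a witnessing set, since a tiny hyperconnected space (e.g.\ a two-point indiscrete space for $n=3$) is vacuously $n$-Hausdorff --- is a fair point that the paper leaves implicit.
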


\begin{theorem}\rm\label{non exists}
	There is no $n$-Hausdorff non Hausdorff $m$-homogeneous space for every $n>2$ and every $m>1$.
\end{theorem}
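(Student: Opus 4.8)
The plan is to argue by contradiction, combining Proposition \ref{yes hyper} with Proposition \ref{no Hyper}. Suppose $X$ is an $n$-Hausdorff, non-Hausdorff, $m$-homogeneous space with $n > 2$ and $m > 1$. Since $m > 1$, $m$-homogeneity implies $2$-homogeneity (every $(k+1)$-homogeneous space is $k$-homogeneous, as noted after Definition 1.2), so $X$ is a non-Hausdorff $2$-homogeneous space. By Proposition \ref{yes hyper}, $X$ is hyperconnected. On the other hand, $X$ is $n$-Hausdorff with $n \geq 2$, so by Proposition \ref{no Hyper}, $X$ is not hyperconnected — a contradiction. Hence no such space exists.

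The argument is essentially a two-line deduction once the preparatory propositions are in place, so I do not expect any genuine obstacle. The only point requiring a moment's care is the reduction from $m$-homogeneity to $2$-homogeneity: one must use that $m > 1$ means $m \geq 2$ (since the paper fixes $n$, and here $m$, to be an integer), and invoke the downward closure of $n$-homogeneity stated in the paragraph following Definition \ref{}. After that, the hyperconnectedness dichotomy between Propositions \ref{yes hyper} and \ref{no Hyper} closes the proof immediately. It may also be worth remarking that the hypothesis $n > 2$ is what forces genuine non-triviality: for $n = 2$, an $n$-Hausdorff space is simply Hausdorff, so the statement would be vacuous; for $n > 2$ there do exist $n$-Hausdorff non-Hausdorff spaces (as recalled at the start of Section 2, from \cite{B}), and the theorem asserts that none of them can be $m$-homogeneous for any $m \geq 2$.
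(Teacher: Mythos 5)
Your proof is correct and follows exactly the paper's route: the paper also derives the theorem directly from Propositions \ref{yes hyper} and \ref{no Hyper}, with your explicit reduction from $m$-homogeneity to $2$-homogeneity being the (implicit) step the paper leaves to the reader.
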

\begin{proof}
	It follows directly from propositions \ref{no Hyper} and \ref{yes hyper}.
\end{proof}

The following example shows that there exist 3-Hausdorff homogeneous spaces.

\begin{example}\label{countEx}\rm
	A countable $3$-Hausdorff homogeneous space. 
\end{example}
Consider the space $X$ of natural numbers with the topology generated the base $\{\{n,n+1\} : n\hbox{ is even}\}$. $X$ is a $3$-Hausdorff homogeneous space. \hfill$\triangle$\\

Note that the space of the previous example is a homogeneous space which is not $2$-homogeneous.\\

The analougues Proposition \ref{no Hyper} and Theorem \ref{non exists} for $n$-Urysohn spaces do not hold, as the following example shows.

\begin{example}\rm\label{bing}
	A $3$-Urysohn homogeneous space which is not Urysohn.
\end{example}
Consider the well known \lq\lq irrational slope space", also called Bing's Tripod space (see \cite[Example 75]{SS}). This space is $n$-homogeneous, $n \geq 1$  \cite{BBHS}, and $3$-Urysohn.  \hfill$\triangle$\\

Recall that for every $n \geq 2$ there exist examples of $(n+1)$-Hausdorff spaces which are not $n$-Hausdorff \cite{B}, and examples of $(n+1)$-Urysohn spaces which are not $n$-Urysohn \cite{BCM2}. Then it is natural to pose the following questions.

\begin{question}\rm\label{q2}
	Is every $(n+1)$-Hausdorff homogeneous space $n$-Hausdorff, for each $n\geq 2$?
\end{question}

\begin{question}\rm\label{q5}
	Is every $(n+1)$-Urysohn homogeneous space $n$-Urysohn, for each $n \geq 2$?
\end{question}

Examples \ref{countEx} and \ref{bing} answer negatively to questions \ref{q2} and \ref{q5} respectively for $n = 2$. Note that the space of Example \ref{countEx} is $3$-Urysohn, and the construction can be generalized to obtain $(n+1)$-Urysohn non $n$-Hausdorff spaces for each $n \geq 2$.\\

In \cite{B}, Bonanzinga gives an example of an $\omega$-Hausdorff space which is not $n$-Hausdorff for every $n \geq 2$. Now we give a countable $\omega$-Hausdorff homogeneous space which is not $n$-Hausdorff for every $n\geq 2$.

\begin{example}\rm
There is a countable $T_{1}$ hyperconnected (hence not $n$-Hausdorff for every $n \geq 2$) space, which is $\omega$-Hausdorff and homogeneous.
\end{example}
In \cite{BSS}, the following space is constructed. Let $X = \mathbb{Z} \times \mathbb{Z}$ and $\mathcal{B}$ $= \{ U_{j,k}, V_{j,k} : j,k \in \mathbb{Z}\}$ is the subbase for the topology, where
\begin{itemize}
\item[]$U_{j,k}=\{(x,y) \in {\Bbb Z}^{2} : x > j$ or $y > k \}$
\item[]$V_{j,k}=\{(x,y) \in {\Bbb Z}^{2} : x < j$ or $y < k \}.$
\end{itemize}
This is a $T_{1}$ hyperconnected, hence not $n$-Hausdorff space for every $n \geq 2$ which is $\omega$-Hausdorff, homogeneous, first countable, Lindelof.\hfill $\triangle$\\

In \cite{BCM2}, Bonanzinga, Cammaroto and Matveev constructed an Hausdorff space with extent equal to $\kappa$, $\kappa \geq \omega$, which is not $\kappa$-Urysohn (we give this example for sake of completeness, see Example \ref{HardEx} below). The construction of such a space may be considered a modification of irrational slope space \cite[Example 75]{SS}. Since the irrational slope space is homogeneous, it is natural to ask the following.

\begin{question}\rm Is the space in Example \ref{HardEx} homogeneous?
\end{question}

\begin{example}\rm
\label{HardEx}
For every cardinal $\kappa$ there exists a Hausdorff space with extent equal to $\kappa$, $\kappa \geq \omega$, which is not $\kappa$-Urysohn.
\end{example}
Let $˜\tilde{D} = \{d_{\alpha, n} : \alpha < \kappa ,n \in \omega \}$ be a discrete space of cardinality $\kappa$, and $D = \tilde{˜D} \cup \{ p \}$ the one point compactification of $\tilde{˜D}$. Put $E = D \cup \{ d^{*} \}$ where $d^{*}$ is isolated in $E$ and is not in $D$. Consider $\kappa^{+}$ with order topology, $D \times \kappa^{+}$ with the Tychonoff product topology, and denote $W = \{p\} \times \kappa^{+}$; then $W$ is a subspace of $D \times \kappa^{+}$ homeomorphic to $\kappa^{+}$. Also, for $\alpha < \kappa^{+}$ denote $W_{\alpha} = \{p\} \times [\alpha, \kappa^{+})$. For $\alpha < \kappa$, $\beta < \kappa^{+}$, denote $D_{\alpha} = \{d_{\alpha,n} : n \in \omega\}$, and $T_{\alpha,\beta} = D_{\alpha} \times [\beta, \kappa^{+}) \subset D \times \kappa^{+}$. Let $\vec{p}$ be the point in $E^{\kappa^{+}}$ with all coordinates equal to $p$. Let $S = \{x \in E^{\kappa^{+}} : |\{\alpha < \kappa^{+} : x(\alpha) \neq p\}| \leq \kappa\}$ be the $\Sigma_{k}$-product in $E^{\kappa^{+}}$ with center at $\vec{p}$. It can be proved that there is a homeomorphic embedding $f : D \times \kappa^{+} \rightarrow E^{\kappa^{+}}$ such that
\begin{itemize}
\item[(1)] $f(D \times \kappa^{+}) \cap S = f(W)$.
\item[(2)] $f(W)$ is closed in $S$ and homeomorphic to $\kappa^{+}$ with the order topology.
\item[(3)] for every distinct $\alpha, \gamma < \kappa$, the sets $f(T_{\alpha,0})$ and $f(T_{\gamma,0})$ can be separated by open neighbourhoods in $E^{\kappa^{+}}$.
\item[(4)] $\overline{f(T_{\alpha,\beta})} \cap S = f(W_{\beta})$.
\end{itemize}
Finally, let $L = \{l_{\alpha} : \alpha < \kappa \}$ (where all points $l_{\alpha}$ are distinct) be a set disjoint from $E^{\kappa^{+}}$ and topologize $X = S \cup L$ as follows: $S$, with the topology inherited from $E^{\kappa^{+}}$ is open in $X$; a basic neighbourhood of $l_{\alpha}$ takes the form $\{l_{\alpha}\} \cup (U \cap S)$ where $U$ is arbitrary neighbourhood (in $E^{\kappa^{+}}$) of $f(T_{\alpha,\beta})$ for some $\beta < \kappa^{+}$. We recall that $L$ is closed discrete in this space, so $e(X) \geq \kappa$, and for every family $\{U_{l} : l \in L\}$ of neighbourhoods of points $l \in L$ in $X$, $\bigcap \{\overline{U_{l}} : l \in L\}  \neq \emptyset$, so it is not $\kappa$-Urysohn.\hfill $\triangle$\\

\section{On the cardinality of n-Hausdorff homogeneous and $n$-Urysohn homogeneous spaces.}
In \cite{HJ}, Hajnal and Juha\'sz proved that, for every Hausdorff space $X$,  $|X| \leq 2^{c(X)\chi(X)}$. In \cite{B} Bonanzinga proved that $|X|\leq 2^{2^{c(X)\chi(X)}}$ for every $3$-Hausdorff space $X$ and asked if $|X|\leq 2^{c(X)\chi(X)}$ holds for every $n$-Hausdorff space $X$, with $n\geq 2$. In \cite{G} Gotchev, using the cardinal function called \lq \lq non Hausdorff number" introduced independently from \cite{B}, gave a positive answer to the previous question.\\

In \cite{CR}, Carlson and Ridderbos proved the following result.

\begin{theorem}\rm\cite{CR} \label{CRth}
Let $X$ be a homogeneous Hausdorff space. Then $|X| \leq 2^{c(X) \pi \chi (X)}$.
\end{theorem}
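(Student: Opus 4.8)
The plan is to generalize the Hajnal--Juh\'asz inequality $|X|\leq 2^{c(X)\chi(X)}$, using homogeneity to replace the (possibly large) neighbourhood bases that appear there by homeomorphic translates of a single local $\pi$-base. Write $\kappa=c(X)\pi\chi(X)$; the target is $|X|\leq 2^{\kappa}$. Fix a base point $p\in X$ and a local $\pi$-base $\mathcal V$ at $p$ with $|\mathcal V|\leq\pi\chi(X)$, and let $G$ be the group of self-homeomorphisms of $X$. The role of homogeneity is that for every $x\in X$ there is $h_{x}\in G$ with $h_{x}(p)=x$, and then $h_{x}[\mathcal V]=\{h_{x}[V]:V\in\mathcal V\}$ is a local $\pi$-base at $x$ of size $\leq\pi\chi(X)$; thus a \emph{single} small family, transported by $G$, supplies local $\pi$-bases everywhere, and one never has to pay for $\chi(X)$.

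I would then run a closing-off construction (equivalently, take an elementary submodel $M$ with $X$, its topology, $G$, $\mathcal V$, $p$ and $\kappa+1$ in $M$, with $|M|=2^{\kappa}$ and ${}^{\kappa}M\subseteq M$, and aim to show $X\subseteq M$). Concretely, build an increasing chain $\langle A_{\alpha}:\alpha<\kappa^{+}\rangle$ of subsets of $X$ with $|A_{\alpha}|\leq 2^{\kappa}$, $p\in A_{0}$, taking unions at limits, and at successor stages closing off under: (i) for each $a\in A_{\alpha}$, a chosen $h_{a}\in G$ with $h_{a}(p)=a$, retained in the bookkeeping; (ii) for each cellular family $\mathcal W$ with $|\mathcal W|\leq\kappa$ consisting of members of $\bigcup_{a\in A_{\alpha}}h_{a}[\mathcal V]$ for which $\overline{\bigcup\mathcal W}\neq X$, a point chosen from $X\setminus\overline{\bigcup\mathcal W}$; and (iii) witnesses for the Hausdorff separations between points of $A_{\alpha}$ and members of the families $h_{a}[\mathcal V]$. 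Since a cellular family has size $\leq c(X)\leq\kappa$ and is drawn from a set of size $\leq 2^{\kappa}$, there are at most $(2^{\kappa})^{\kappa}=2^{\kappa}$ of them to handle at each stage, so $|A_{\alpha}|\leq 2^{\kappa}$ is preserved; and since $\mathrm{cf}(\kappa^{+})>\kappa$, the union $A=\bigcup_{\alpha<\kappa^{+}}A_{\alpha}$ is closed under all of these operations.

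The final step is to prove $A=X$, and this is where I expect the real work to lie. Assuming $x\in X\setminus A$, one uses Hausdorffness to pick, for each $a\in A$, some $V_{a}\in h_{a}[\mathcal V]$ and an open $O_{a}\ni x$ with $V_{a}\cap O_{a}=\emptyset$, so that $x\notin\overline{V_{a}}$; organising these through a local $\pi$-base at $x$, one extracts for each basic set around $x$ a cellular subfamily of $\{V_{a}:a\in A\}$ of size $\leq c(X)$ whose union has closure disjoint from that basic set. Maximal such subfamilies are already recorded by step (ii), so $A$ ends up covered, finitely-many-at-a-time, by closures of recorded cellular families, each of which omits a point of $A$ by (ii); checking that $x$ itself is forced to be such an omitted point gives the contradiction. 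The main obstacle, and the point that genuinely distinguishes this from the classical argument, is exactly this last maneuver: a local $\pi$-base does \emph{not} separate its point from the rest of $X$, so the Hausdorff hypothesis must be combined with the homogeneous transport of $\mathcal V$ in just the right way for the cellularity bookkeeping of (i)--(iii) to actually deliver the needed separation, and pinning down the precise list of operations is the heart of the proof. Once $A=X$ is established, $|X|=|A|\leq 2^{\kappa}=2^{c(X)\pi\chi(X)}$.
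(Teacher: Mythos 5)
There is a genuine gap here: your plan stalls exactly at the step you yourself flag as ``where the real work lies,'' and that step is the whole theorem. In the Hajnal--Juh\'asz closing-off argument the separation step works because the open set chosen for $a$ \emph{contains} $a$, so the recorded sets cover $A$ and the omitted-point bookkeeping closes the trap. Here the sets $V_a\in h_a[\mathcal V]$ are only local $\pi$-base elements at $a$: they need not contain $a$, so $\{V_a:a\in A\}$ need not cover $A$, the ``covered finitely-many-at-a-time by closures of recorded cellular families'' claim has no justification, and no mechanism is given that forces the outside point $x$ to coincide with a point omitted at stage (ii). You correctly identify this obstruction, but you do not resolve it, and it is not a routine repair: with only $c(X)$ and $\pi\chi(X)$ available, a straightforward closing-off along these lines is not known to work, which is precisely why the cited proof takes a different route.

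The proof in \cite{CR} (mirrored in this paper's proof of the $3$-Hausdorff analogue via Theorem \ref{ER}) is a partition-relation argument rather than a closing-off one. Set $\kappa=c(X)\pi\chi(X)$ and suppose $|X|\geq(2^{\kappa})^{+}$. Fix $p$, a local $\pi$-base $\mathcal B$ at $p$ with $|\mathcal B|\leq\kappa$, and homeomorphisms $h_x$ with $h_x(p)=x$. For each pair $\{x_1,x_2\}$ choose disjoint neighbourhoods $U_1,U_2$ and pick $B(x_1,x_2)\in\mathcal B$ contained in the open neighbourhood $h_{x_1}^{\leftarrow}(U_1)\cap h_{x_2}^{\leftarrow}(U_2)$ of $p$; this defines a colouring $[X]^2\to\mathcal B$ with $\kappa$ colours. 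The Erd\H{o}s--Rado relation $(2^{\kappa})^{+}\to(\kappa^{+})^{2}_{\kappa}$ yields $Z\in[X]^{\kappa^{+}}$ and a single $B\in\mathcal B$ with $B(z_1,z_2)=B$ for all pairs from $Z$; then $\{h_z(B):z\in Z\}$ is pairwise disjoint (a common point would land in $U_1\cap U_2=\emptyset$) and of size $\kappa^{+}$, contradicting $c(X)\leq\kappa$. This is where homogeneity and $\pi$-character are actually cashed in, and it bypasses entirely the separation difficulty on which your outline founders. As it stands, your proposal is a plausible program, not a proof.
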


In fact, in \cite{CR} it is proved that the previous theorem holds for power homogeneous Hausdorff spaces. Recall that a topological space $X$ is power homogeneous if $X^\mu$ is homogeneous for some cardinal number $\mu$.  Clearly, if a space is homogeneous it is power homogeneous.\\

Then, it is natural to pose the following question.

\begin{question}\label{Q36}\rm
	 Is $|X|\leq 2^{c(X)\pi\chi(X)}$ true for every homogeneous space $X$ such that $H(X)$ is finite?
\end{question}

In the following we give partial answers to the previous question.\\

Given a set $A$ and a cardinal $\kappa$, $[X]^{\kappa}$ denotes the set of all subsets of $A$ whose cardinality is $\kappa$.\\

\begin{theorem}\rm \label{ER} \cite{ER}
	 Let $\kappa$ be a cardinal number and $f: [(2^{2^{\kappa}})^{+}]^3 \to \kappa$ a function, then there exists a subset $H\in  [(2^{2^{\kappa}})^{+}]^{\kappa^+} $ such that $f\restriction [H]^3$ is constant.
	\end{theorem}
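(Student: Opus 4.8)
The statement is the Erd\H{o}s--Rado partition theorem for triples, i.e.\ the relation $(2^{2^\kappa})^+\to(\kappa^+)^3_\kappa$; it sits one level above the pairs relation $(2^\kappa)^+\to(\kappa^+)^2_\kappa$, which in turn sits above the trivial pigeonhole relation $\kappa^+\to(\kappa^+)^1_\kappa$. The plan is to prove it by the classical ``stepping-up'' technique, deducing the triples case from the pairs case. Throughout I use the arithmetic identities $\kappa^\kappa=2^\kappa$ and $\kappa^{2^\kappa}=2^{2^\kappa}$ (valid since $\kappa$ is infinite --- all cardinal functions here being at least $\omega$) and the regularity of the successor cardinals $(2^\kappa)^+$ and $(2^{2^\kappa})^+$.

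\emph{Pairs.} Given $g:[(2^\kappa)^+]^2\to\kappa$, I would first extract an \emph{end-homogeneous} subset $E\subseteq(2^\kappa)^+$ with $|E|=\kappa^+$: one for which $g(\{a,b\})$ depends only on $a$ whenever $a<b$ in $E$. Build $E=\{x_\alpha:\alpha<\kappa^+\}$ by recursion of length $\kappa^+$, maintaining a ``reservoir'' $R_\alpha\subseteq(2^\kappa)^+$ of cardinality $(2^\kappa)^+$ lying entirely above $\{x_\beta:\beta<\alpha\}$ and on which the map $y\mapsto\langle g(\{x_\beta,y\}):\beta<\alpha\rangle$ is constant; put $x_\alpha=\min R_\alpha$, and observe that there are at most $\kappa^\kappa=2^\kappa<(2^\kappa)^+$ possible values for the analogous map over $\{x_\beta:\beta\le\alpha\}$, so by regularity of $(2^\kappa)^+$ one may pass to a reservoir $R_{\alpha+1}$ of full cardinality. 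On $E$ the colour $g_1(a):=g(\{a,b\})$ (for any $b\in E$ with $b>a$) is well defined; as $\kappa^+$ is regular and $\kappa<\kappa^+$, pigeonhole gives $H\subseteq E$, $|H|=\kappa^+$, with $g_1$ constant on $H$, and then $g$ is constant on $[H]^2$.

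\emph{Triples.} For $f:[(2^{2^\kappa})^+]^3\to\kappa$, run the same recursion --- now of length $(2^\kappa)^+$ inside $(2^{2^\kappa})^+$ --- to obtain an end-homogeneous $E\subseteq(2^{2^\kappa})^+$ with $|E|=(2^\kappa)^+$, where end-homogeneity means $f(\{a,b,c\})$ depends only on $\{a,b\}$ for $a<b<c$ in $E$; the reservoirs must now stabilise the map $y\mapsto\langle f(\{x_{\beta_0},x_{\beta_1},y\}):\beta_0<\beta_1<\alpha\rangle$, of which there are at most $\kappa^{2^\kappa}=2^{2^\kappa}<(2^{2^\kappa})^+$ many, and $(2^{2^\kappa})^+$ is regular, so the successor steps go through. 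Define $g:[E]^2\to\kappa$ by $g(\{a,b\})=f(\{a,b,c\})$ for an arbitrary $c\in E$ above $\max\{a,b\}$; by end-homogeneity this does not depend on the choice of $c$. Since $|E|=(2^\kappa)^+$, the pairs case applied to $g$ yields $H\subseteq E$ with $|H|=\kappa^+$ on which $g$ is constant; for $a<b<c$ in $H$ we get $f(\{a,b,c\})=g(\{a,b\})=$ that constant, so $f$ is constant on $[H]^3$. Shrinking $H$ if necessary, we may assume $|H|=\kappa^+$ exactly, giving $H\in[(2^{2^\kappa})^+]^{\kappa^+}$ as required.

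\emph{Main obstacle.} The delicate step is the limit stages of the two recursions: a decreasing transfinite sequence of reservoirs, each of full cardinality, may have small intersection, so when $\alpha$ is a limit one cannot simply set $R_\alpha=\bigcap_{\beta<\alpha}R_\beta$. The standard remedy is to reorganise the construction as a ramification (tree) argument --- one builds a tree of ``colour-types'' of height the relevant successor cardinal and chooses a branch meeting a level of the correct size --- or, equivalently, to run the recursion along a continuous increasing chain of elementary submodels of a sufficiently large $H(\theta)$, each of cardinality $<(2^{2^\kappa})^+$ (respectively $<(2^\kappa)^+$), where continuity handles limit stages for free. In every presentation the point that makes the argument work is precisely the regularity of $(2^{2^\kappa})^+$ and $(2^\kappa)^+$ together with the collapses $\kappa^{2^\kappa}=2^{2^\kappa}$ and $\kappa^\kappa=2^\kappa$.
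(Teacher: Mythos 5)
The paper itself does not prove this statement: it is the classical Erd\H{o}s--Rado partition relation $(2^{2^\kappa})^+\to(\kappa^+)^3_\kappa$, quoted directly from \cite{ER}. Your proposal supplies the standard stepping-up proof of that cited result (end-homogeneous extraction reducing triples to pairs, pairs to pigeonhole), and the skeleton is right: the counting bounds $\kappa^\kappa=2^\kappa$ and $\kappa^{2^\kappa}=2^{2^\kappa}$, the use of regularity of the successors, and the reduction of $f$ on $[E]^3$ to a well-defined $g:[E]^2\to\kappa$ via end-homogeneity are all exactly as in the classical argument. The one real weakness is the point you flag yourself: the recursion as you wrote it, which carries a single reservoir $R_\alpha$ of full cardinality, genuinely breaks at limit ordinals (a decreasing chain of sets of size $(2^{2^\kappa})^+$ can have small, even empty, intersection), and your text only names the standard repairs (ramification tree, elementary submodels) without carrying either out, so as written the proof is a sketch rather than a complete argument. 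The quickest way to close it is the one-submodel form: take $M\prec H(\theta)$ with $f\in M$, $|M|=2^{2^\kappa}$ and ${}^{2^\kappa}M\subseteq M$ (possible since $(2^{2^\kappa})^{2^\kappa}=2^{2^\kappa}$), fix $x\in(2^{2^\kappa})^+\setminus M$, and build $\{x_\alpha:\alpha<(2^\kappa)^+\}\subseteq M$ recursively so that $f(\{x_{\beta_0},x_{\beta_1},x_\alpha\})=f(\{x_{\beta_0},x_{\beta_1},x\})$ for all $\beta_0<\beta_1<\alpha$; at every stage (limit or successor) the relevant colour pattern is a $\le 2^\kappa$-sized object, hence lies in $M$, and $x$ witnesses by elementarity that a suitable $x_\alpha\in M$ exists, so no reservoir and no limit-stage difficulty arise. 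The same device with $|M|=2^\kappa$, ${}^\kappa M\subseteq M$ gives your pairs case. With that substitution (or with the ramification argument spelled out) your route is a correct, self-contained proof of the theorem the paper only cites.
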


\begin{theorem}\rm
Let $X$ be a $3$-Hausdorff homogeneous space. Then
\begin{center}
$|X| \leq 2^{2^{c(X) \pi \chi (X)}}$ 
\end{center}
\begin{proof}
Let $c(X) \pi \chi (X) = \kappa$. Then, by Proposition \ref{P1}, we have $p_{2}(X) \leq \kappa$. Suppose that $|X| \geq (2^{2^{\kappa}})^{+}$. For every triple $x_{1}, x_{2}, x_{3} \in X$ of distinct points  select neighbouroods $U_{i}(x_{1}, x_{2}, x_{3})$ of $x_{i}$ for $i=1,2,3$ such that\\
$\bigcap_{i=1}^{3} U_{i}(x_{1}, x_{2}, x_{3}) = \emptyset$. Fix a point $p \in X$ and a local $\pi$-base $\mathcal{B}$ for $p$ with $|\mathcal{B}|=\kappa$. Since the space is homogeneous, there exists a family $\{ h_{x} \}_{x \in X}$ of homeomorphisms $h_{x} : X \rightarrow X$ such that $h_{x}(p) = x$ for every $x \in X$. Fix distinct points $x_{1}, x_{2}, x_{3} \in X$ and observe that the set $\bigcap_{i=1}^{3} h^{\leftarrow}_{x_{i}}(U_{i}(x_{1}, x_{2}, x_{3}))$ is an open neighbourhood of $p$; since $\mathcal{B}$ is a $\pi$-base, there is a non empty $B(x_{1}, x_{2}, x_{3}) \in \mathcal{B}$ such that $B(x_{1}, x_{2}, x_{3})$ is contained in it. Consider now the function $f : [X]^{3} \rightarrow \mathcal{B}$ defined by $f(\{x_{1}, x_{2}, x_{3} \}) = B(x_{1}, x_{2}, x_{3})$. Then by Theorem \ref{ER} there is $Z \in [X]^{\kappa^{+}}$ and $B \in \mathcal{B}$ such that $f\restriction [Z]^{3}= \{B \}$. 

Now, the family $\{ h_{z}(B) : z \in Z \}$ is point-($\leq 2$) finite in $X$. To see this, suppose by way of contradiction that there exists $x_{0} \in X$ such that $|\{ h_{z}(B) : x_{0} \in h_{z}(B)\}|=3$. So there are $z_{1}, z_{2}, z_{3} \in X$ such that $x_{0} \in h_{z_{i}}(B),\hspace{1mm}  i=1,2,3$. This implies $x_{0} \in h_{z_{i}}(B) \subseteq h_{z_{i}}(\bigcap\limits_{i=1}^{3} h_{z_{i}}^{\leftarrow}(U_{i}(z_{1}, z_{2}, z_{3}))) \subseteq h_{z_{i}}( h_{z_{i}}^{\leftarrow}(U_{i}(z_{1}, z_{2}, z_{3})))= U_{i}(z_{1}, z_{2}, z_{3})$. Then, $x_{0} \in \bigcap\limits_{i=1}^{3} U_{i}(z_{1}, z_{2}, z_{3}) \neq \emptyset$, a contradiction.

Furthermore, $\{ h_{z}(B) : z \in Z \}$ has cardinality exactly $\kappa^{+}$. Otherwise there exists $z_{0} \in Z$ s.t. $|\{ z \in Z : h_{z}(B) = h_{z_{0}}(B) \}|=\kappa^{+}$. As before, from $h_{z_{0}}(B)\subseteq U_{i}(z_{1}, z_{2}, z_{3})$ for every triple of elements in $\{ z \in Z : h_{z}(B) = h_{z_{0}}(B) \}$  we obtain a contradiction.

Thus $p_2(X)=\kappa^+$, a contradiction with $p_{2}(X) \leq \kappa$. This concludes the proof.
\end{proof}
\end{theorem}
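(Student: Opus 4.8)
The plan is to bound $|X|$ by applying the Erd\H{o}s--Rado--type partition relation of Theorem \ref{ER} to a colouring of triples of points of $X$ by elements of a fixed local $\pi$-base at one point, and then to derive a contradiction from the existence of a large homogeneous set by exhibiting a point-($\leq 2$) finite family that is too large for $p_2(X)=c(X)\leq c(X)\pi\chi(X)=\kappa$ (Proposition \ref{P1}). So, assuming toward a contradiction that $|X|\geq (2^{2^{\kappa}})^+$, I would first record, for each \emph{distinct} triple $x_1,x_2,x_3$, witnesses $U_i(x_1,x_2,x_3)$ to $3$-Hausdorffness, i.e.\ neighbourhoods of the $x_i$ whose intersection is empty (this uses $H(X)\leq 3$). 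Fix a point $p$, a family of homeomorphisms $\{h_x\}_{x\in X}$ with $h_x(p)=x$ coming from homogeneity, and a local $\pi$-base $\mathcal{B}$ at $p$ with $|\mathcal{B}|=\pi\chi(X)\leq\kappa$.

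Next I would define the colouring $f:[X]^3\to\mathcal{B}$: given $\{x_1,x_2,x_3\}$, the set $\bigcap_{i=1}^3 h_{x_i}^{\leftarrow}(U_i(x_1,x_2,x_3))$ is an open neighbourhood of $p$ (each $h_{x_i}^{\leftarrow}(U_i)$ contains $p$ since $h_{x_i}(p)=x_i\in U_i$), so it contains some nonempty $B(x_1,x_2,x_3)\in\mathcal{B}$; set $f(\{x_1,x_2,x_3\})$ to be such a $B$. Since $|X|\geq (2^{2^{\kappa}})^+$ and $|\mathcal{B}|\leq\kappa$, Theorem \ref{ER} yields $Z\in[X]^{\kappa^+}$ and a single $B\in\mathcal{B}$ with $f\restriction[Z]^3\equiv\{B\}$. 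The crucial consequence to extract is that for any three distinct $z_1,z_2,z_3\in Z$ we have $B\subseteq h_{z_i}^{\leftarrow}(U_i(z_1,z_2,z_3))$, equivalently $h_{z_i}(B)\subseteq U_i(z_1,z_2,z_3)$ for each $i$.

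Then I would show $\{h_z(B):z\in Z\}$ is point-($\leq 2$) finite: if some $x_0$ lay in three members $h_{z_1}(B),h_{z_2}(B),h_{z_3}(B)$ with $z_1,z_2,z_3$ distinct, then $x_0\in h_{z_i}(B)\subseteq U_i(z_1,z_2,z_3)$ for all $i$, so $x_0\in\bigcap_{i=1}^3 U_i(z_1,z_2,z_3)=\emptyset$, absurd. Symmetrically I would argue the family has cardinality exactly $\kappa^+$: if some fibre $\{z\in Z:h_z(B)=h_{z_0}(B)\}$ had size $\kappa^+$ (so in particular contained three distinct points), picking any three of them gives $h_{z_0}(B)\subseteq\bigcap U_i=\emptyset$, so $h_{z_0}(B)=\emptyset$, contradicting that $B\neq\emptyset$ and $h_{z_0}$ is a homeomorphism; hence the map $z\mapsto h_z(B)$ has all fibres of size $\leq\kappa$, forcing the range to have size $\kappa^+$. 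Consequently $p_2(X)\geq\kappa^+$, contradicting $p_2(X)=c(X)\leq\kappa$ from Proposition \ref{P1}, and therefore $|X|\leq 2^{2^{\kappa}}=2^{2^{c(X)\pi\chi(X)}}$.

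I expect the only genuinely delicate point to be the bookkeeping around ``distinct'' triples and the fibre argument — making sure that the homogeneous set $Z$ really does give the inclusions $h_{z_i}(B)\subseteq U_i(z_1,z_2,z_3)$ for \emph{every} choice of three distinct indices (so that both the point-($\leq 2$)-finiteness and the size-$\kappa^+$ claims go through), and that a size-$\kappa^+$ fibre would in particular contain three distinct points so the $3$-Hausdorff witness can be invoked. The topological content is light once Theorem \ref{ER} and Proposition \ref{P1} are in hand; everything else is a direct translation along the homeomorphisms $h_x$, exactly as in the proof of Theorem \ref{CRth} but with the point-finiteness replaced by point-($\leq 2$)-finiteness and Erd\H{o}s--Rado replaced by its two-step version.
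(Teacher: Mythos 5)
Your proposal is correct and follows essentially the same route as the paper's proof: colour triples by members of a local $\pi$-base at $p$ via the pulled-back $3$-Hausdorff witnesses, apply Theorem \ref{ER} to get a constant-colour set $Z$ of size $\kappa^{+}$, and contradict $p_{2}(X)=c(X)\leq\kappa$ by showing $\{h_{z}(B):z\in Z\}$ is point-($\leq 2$) finite of size $\kappa^{+}$. Your treatment of the fibre-counting step (no fibre of $z\mapsto h_{z}(B)$ can contain three distinct points, so by regularity of $\kappa^{+}$ the range has size $\kappa^{+}$) is just a slightly more explicit rendering of the paper's same argument.
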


Recall the following result.

\begin{theorem}\label{ER2}\rm
	Let $\kappa$ be a cardinal number, $n\geq3$ and  $f: [(2^{2^{.^{.^{.^{2^{\kappa}}}}}})^{+}]^n \to \kappa$ a function (where the power is made $(n-1)$-many times), then there exists a subset $H\in  [(2^{2^{.^{.^{.^{2^{\kappa}}}}}})^{+}]^{\kappa^+} $ such that $f\restriction [H]^n$ is constant.
\end{theorem}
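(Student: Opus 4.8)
Theorem~\ref{ER2} is the general form of the Erd\H{o}s--Rado partition relation. Writing $\beth_0(\kappa)=\kappa$ and $\beth_{k+1}(\kappa)=2^{\beth_k(\kappa)}$, it asserts that $\beth_{n-1}(\kappa)^+\to(\kappa^+)^n_\kappa$, that is, that every $\kappa$-colouring of $[\beth_{n-1}(\kappa)^+]^n$ is constant on $[H]^n$ for some $H$ with $|H|=\kappa^+$; Theorem~\ref{ER} is precisely the instance $n=3$. The plan is to prove Theorem~\ref{ER2} by induction on $n\ge 3$, taking Theorem~\ref{ER} as the base case and carrying out the classical Erd\H{o}s--Rado ``stepping-up'' argument in the inductive step.

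Assume the statement for some $n\ge 3$ and let $f:[\lambda]^{n+1}\to\kappa$, where $\lambda=\beth_n(\kappa)^+$; writing $\mu=\beth_{n-1}(\kappa)$ we have $\lambda=(2^\mu)^+$, so $\lambda$ is regular, and moreover $\kappa\le\mu$ and $\kappa^\mu\le\mu^\mu=2^\mu<\lambda$. First I would produce an \emph{end-homogeneous} set $W\subseteq\lambda$ with $|W|=\mu^+$, meaning that $f(s\cup\{b\})=f(s\cup\{c\})$ whenever $s\in[W]^n$ and $b,c\in W$ with $b,c>\max s$. This is obtained by a transfinite recursion of length $\mu^+$: one builds a strictly increasing sequence $\langle a_\xi:\xi<\mu^+\rangle$ in $\lambda$ together with a decreasing family of ``large'' subsets of $\lambda$, using at each stage that a subset of $\lambda$ of size $\le\mu$ admits at most $\kappa^\mu\le 2^\mu<\lambda$ colourings of its $n$-subsets, together with the regularity of $\lambda$, to freeze the values $f(s\cup\{\cdot\})$ for all the relevant $s$; one then sets $W=\{a_\xi:\xi<\mu^+\}$. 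Given $W$, define $g:[W]^n\to\kappa$ by letting $g(s)$ be the common value $f(s\cup\{b\})$ for $b\in W$ with $b>\max s$ (this is well defined by end-homogeneity). By the inductive hypothesis applied to $g$ there is $H\subseteq W$ with $|H|=\kappa^+$ on which $g$ is constant, say with value $c_0$. Then $f$ is constant with value $c_0$ on $[H]^{n+1}$: for $t\in[H]^{n+1}$, writing $b=\max t$ and $s=t\setminus\{b\}\in[H]^n$, we have $b>\max s$ and $b\in W$, hence $f(t)=f(s\cup\{b\})=g(s)=c_0$. This completes the induction.

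The step I expect to be the main obstacle is the construction of the end-homogeneous set $W$ -- specifically, keeping the auxiliary ``large'' sets of size $\lambda$ through the limit stages of the recursion, where one must pass to the intersection of the sets chosen so far. This is the classical combinatorial core of the Erd\H{o}s--Rado theorem; in the write-up one would either carry this construction out in detail or simply invoke the general Erd\H{o}s--Rado relation from \cite{ER}, of which Theorem~\ref{ER2} is a restatement.
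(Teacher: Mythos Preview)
Your proposal is correct: this is exactly the classical Erd\H{o}s--Rado theorem $\beth_{n-1}(\kappa)^+\to(\kappa^+)^n_\kappa$, and the induction via end-homogeneous sets that you outline is the standard proof. The paper, however, gives no proof at all of Theorem~\ref{ER2}; it is introduced with ``Recall the following result'' and treated as a known fact (the case $n=3$ is explicitly attributed to \cite{ER}, and the general case is the same theorem from that reference). So you have in fact gone beyond what the paper does: where the authors simply quote the result, you sketch the stepping-up argument. Your final remark---that one could alternatively just invoke \cite{ER}---is precisely the route the paper takes.
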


\begin{theorem}\rm\label{n-1power}
Let $X$ be an $n$-Hausdorff homogeneous space, with $n \geq 2$. Then
\begin{center}
$|X| \leq 2^{2^{.^{.^{.^{2^{c(X) \pi \chi (X)}}}}}}$
\end{center}
where the power is made $(n-1)$-many times.
\end{theorem}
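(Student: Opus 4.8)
The plan is to run the same argument as in the preceding theorem (the case $n=3$), but using the Erdős--Rado--type partition relation in the form of Theorem \ref{ER2} for $n$-element subsets, and using point-$(\leq n-1)$ finiteness in place of point-$(\leq 2)$ finiteness. Let $\kappa = c(X)\pi\chi(X)$, so by Proposition \ref{P1} we have $p_{n-1}(X) \leq \kappa$. Assume toward a contradiction that $|X|$ is at least the $(n-1)$-fold iterated exponential of $\kappa$, taken to the $+$; this is exactly the cardinal appearing in the hypothesis of Theorem \ref{ER2}. Since $X$ is $n$-Hausdorff, for each $n$-element subset $\{x_1,\dots,x_n\}$ of $X$ we may choose open neighbourhoods $U_i(x_1,\dots,x_n)$ of $x_i$ with $\bigcap_{i=1}^n U_i(x_1,\dots,x_n) = \emptyset$. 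Fix $p \in X$, a local $\pi$-base $\mathcal{B}$ at $p$ with $|\mathcal{B}| = \kappa$, and by homogeneity a family $\{h_x\}_{x\in X}$ of homeomorphisms with $h_x(p) = x$. For each $n$-element set, $\bigcap_{i=1}^n h_{x_i}^{\leftarrow}(U_i(x_1,\dots,x_n))$ is a neighbourhood of $p$, so it contains a nonempty $B(x_1,\dots,x_n) \in \mathcal{B}$; this defines $f \colon [X]^n \to \mathcal{B}$.

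Next I would apply Theorem \ref{ER2} to obtain $Z \in [X]^{\kappa^+}$ and a single $B \in \mathcal{B}$ with $f\restriction [Z]^n = \{B\}$. The key claim is that the family $\{h_z(B) : z \in Z\}$ is point-$(\leq n-1)$ finite. Suppose not: then some $x_0 \in X$ lies in $h_{z_i}(B)$ for $n$ distinct points $z_1,\dots,z_n \in Z$. Since $\{z_1,\dots,z_n\} \in [Z]^n$, we have $B = B(z_1,\dots,z_n) \subseteq h_{z_i}^{\leftarrow}(U_i(z_1,\dots,z_n))$ for each $i$, hence $h_{z_i}(B) \subseteq U_i(z_1,\dots,z_n)$, and so $x_0 \in \bigcap_{i=1}^n U_i(z_1,\dots,z_n) = \emptyset$, a contradiction. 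Finally, as in the $n=3$ case, $\{h_z(B) : z \in Z\}$ has cardinality exactly $\kappa^+$: if $h_{z_0}(B) = h_z(B)$ for $\kappa^+$-many $z \in Z$, pick any $n$ distinct such indices $z_1,\dots,z_n$ and run the same computation, $h_{z_0}(B) = h_{z_i}(B) \subseteq U_i(z_1,\dots,z_n)$, to get a point of the empty intersection. Hence $p_{n-1}(X) \geq \kappa^+$, contradicting $p_{n-1}(X) \leq \kappa$.

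The proof is essentially routine once the earlier $n=3$ argument and Theorem \ref{ER2} are in hand; the only points requiring mild care are bookkeeping the height of the iterated exponential so that it matches Theorem \ref{ER2} exactly (the partition relation consumes $n$-tuples and produces a homogeneous set of size $\kappa^+$, which is precisely what point-$(\leq n-1)$ finiteness needs to be violated), and checking the degenerate end of the range: for $n=2$ the statement reduces to $|X| \leq 2^{c(X)\pi\chi(X)}$, which should be noted to coincide with (the homogeneous case of) Theorem \ref{CRth}, since the $n=2$ instance is not covered by the Erdős--Rado statement as written and instead follows from the Hausdorff case directly. I expect no genuine obstacle beyond this indexing care; the structural heart — deriving a point-$(\leq n-1)$ finite family of size $\kappa^+$ from an $n$-Hausdorff homogeneous space that is too large — is already present in the case just proved.
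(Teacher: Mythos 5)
Your proposal is correct and is exactly the argument the paper intends: its own proof of this theorem simply says to repeat the $n=3$ argument with Theorem \ref{ER2} in place of Theorem \ref{ER}, which is what you carry out, including the point-$(\leq n-1)$ finiteness contradiction against $p_{n-1}(X)\leq c(X)\pi\chi(X)$. Your remark that the $n=2$ case falls back on the Hausdorff result (Theorem \ref{CRth}) is a sensible extra precision not spelled out in the paper.
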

\begin{proof}
	Similar to the proof of the previous theorem using Theorem \ref{ER2} instead of Theorem \ref{ER}. 
\end{proof}

Next Theorem \ref{THU} shows that Question \ref{Q36} has a positive answer if $H(X)$ is replaced by $U(X)$.\\

In \cite{CPR}, Carlson, Porter and Ridderbos the following result.

\begin{theorem}\rm\cite{CPR}\label{nHdensity}
	If $X$ is an $n$-Hausdorff homogeneous space, with $n \geq 2$, then $|X|\leq d(X)^{\pi\chi(X)}$. 
\end{theorem}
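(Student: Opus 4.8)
The plan is to combine the homogeneity of $X$ with the Hajnal--Juh\'asz-style ``free sequence'' or elementary-submodel counting argument that yields $|X| \le d(X)^{\pi\chi(X)}$ in the Hausdorff case, but to replace the role of the Hausdorff property by the weaker $n$-Hausdorff property via a point-$(\le n-1)$ finiteness (equivalently, cellularity) argument of the kind already used in the proof of Theorem \ref{n-1power}. Set $\kappa = \pi\chi(X)$ and $\lambda = d(X)$, and fix a dense set $D \subseteq X$ with $|D| = \lambda$. Fix a point $p \in X$ and a local $\pi$-base $\mathcal{B}$ at $p$ with $|\mathcal{B}| = \kappa$, and by homogeneity a family $\{h_x : x \in X\}$ of homeomorphisms with $h_x(p) = x$. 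Then $\{h_x(\mathcal{B}) : x \in X\}$ is a family of local $\pi$-bases, one at each point, each of size $\kappa$; so $X$ has a $\pi$-base-like structure that is ``uniformly small.''

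The key step is to assign to each point $x \in X$ a trace in $[D]^{\le \kappa}$, or more precisely a function from $\mathcal{B}$ into $[D]^{\le \omega}$ (or into $\mathcal{P}(D)$), in such a way that distinct points receive distinct traces; this bounds $|X|$ by $(2^\lambda)^\kappa = 2^{\lambda\kappa}$ at first pass, and one then sharpens to $\lambda^\kappa$ using that only small pieces of $D$ are needed. Concretely: for $B \in \mathcal{B}$, the set $h_x(B)$ is a nonempty open set, so meets $D$; record $h_x(B) \cap D$, or a countable subset of it chosen coherently. The injectivity of $x \mapsto (h_x(B)\cap D)_{B\in\mathcal{B}}$ is where $n$-Hausdorffness enters: if $n$ distinct points $x_1,\dots,x_n$ all produced the same trace, one chooses (as in the proof of Theorem \ref{n-1power}) neighbourhoods $U_i$ of $x_i$ with $\bigcap_{i=1}^n U_i = \emptyset$, pulls them back through the $h_{x_i}$ to get open sets at $p$, shrinks inside $\mathcal{B}$ to get $B_i \in \mathcal{B}$ with $h_{x_i}(B_i) \subseteq U_i$, and then the coincidence of traces on a suitable common $B \in \mathcal{B}$ forces a point of $D$ into $\bigcap_{i=1}^n U_i$, a contradiction. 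This shows the fibers of the trace map have size $\le n-1$, hence $|X| \le (n-1)\cdot \lambda^\kappa = \lambda^\kappa$.

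The main obstacle I anticipate is the bookkeeping needed to get the bound down to $d(X)^{\pi\chi(X)}$ rather than the crude $2^{d(X)\pi\chi(X)}$: one must argue that each point is determined not by an arbitrary subset of $D$ for each $B \in \mathcal{B}$, but by a function $\mathcal{B} \to [D]^{\le\omega}$ (or even $\mathcal{B} \to D$), so that the number of possible traces is $(\lambda^{\omega})^{\kappa} = \lambda^{\kappa}$ rather than $(2^\lambda)^\kappa$. Achieving this likely requires an elementary submodel $M$ with $|M| = \lambda^\kappa$, $\,{}^{\omega}M \subseteq M$ or ${}^{<\kappa}M\subseteq M$, containing $X$, $D$, $\mathcal{B}$, and the family $\{h_x\}$, and then showing $X \subseteq M$ by a closing-off argument: if $x \in X \setminus M$, the trace of $x$ restricted to $\mathcal{B}$ nearly lies in $M$, and one extracts $n$ points of $X \cap M$ (or a single point of $D$) witnessing a failure of $n$-Hausdorffness. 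Alternatively one can run the Pol--Shapirovskii-type construction of an increasing $\kappa^+$-chain of sets directly, using at each stage the $n$-Hausdorff separation to control new points; the cellularity identity $p_{n-1}(X) = c(X) \le d(X)$ from Proposition \ref{P1} keeps the relevant families of open sets small throughout. I would present the elementary-submodel version as the cleanest route, deferring the point-$(\le n-1)$ finiteness contradiction — already rehearsed in Theorems \ref{n-1power} — to the verification that $X \subseteq M$.
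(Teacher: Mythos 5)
This theorem is quoted in the paper from \cite{CPR} without a proof, so there is no in-paper argument to compare against; judging your proposal on its own, the overall strategy (homogeneously transported local $\pi$-base at a fixed point $p$, traces on a dense set, $n$-Hausdorffness bounding the fiber size by $n-1$) is exactly the right one, but as written there is a gap in the decisive counting step. With the trace $x\mapsto (h_x(B)\cap D)_{B\in\mathcal{B}}$ you only get an injection-up-to-$(n-1)$ into $\mathcal{P}(D)^{\mathcal{B}}$, i.e.\ the bound $|X|\leq 2^{d(X)\pi\chi(X)}$; your line ``hence $|X|\leq (n-1)\cdot\lambda^{\kappa}=\lambda^{\kappa}$'' does not follow, as you yourself note. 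The repair you then sketch (an elementary submodel $M$ with $^{\omega}M\subseteq M$, or a Pol--Shapirovskii chain) is never carried out, and it is also unnecessary machinery for this statement.

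The missing observation is that the single-point trace you parenthetically mention ($\mathcal{B}\to D$) already works, with the same contradiction you rehearsed. For each $x\in X$ and $B\in\mathcal{B}$ choose one point $t_x(B)\in h_x(B)\cap D$ (nonempty since $h_x(B)$ is open and $D$ is dense), so $t_x\in D^{\mathcal{B}}$ and there are at most $d(X)^{\pi\chi(X)}$ possible traces. If $x_1,\dots,x_n$ are distinct points with $t_{x_1}=\dots=t_{x_n}=t$, pick neighbourhoods $U_i$ of $x_i$ with $\bigcap_{i=1}^{n}U_i=\emptyset$, note that $\bigcap_{i=1}^{n}h_{x_i}^{\leftarrow}(U_i)$ is an open neighbourhood of $p$, and take one common $B\in\mathcal{B}$ inside it (a single $B$, not separate $B_i$'s); then $t(B)=t_{x_i}(B)\in h_{x_i}(B)\subseteq U_i$ for every $i$, so $t(B)\in\bigcap_{i=1}^{n}U_i$, a contradiction. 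Hence each fiber of $x\mapsto t_x$ has size at most $n-1$ and $|X|\leq (n-1)\, d(X)^{\pi\chi(X)}=d(X)^{\pi\chi(X)}$. The point is that equality of traces transfers the chosen witness point to all $n$ points simultaneously, which is exactly the uniformity you were trying to buy with the submodel; no closure properties, free sequences, or $[D]^{\leq\omega}$-valued traces are needed.
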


Also recall that a space is quasiregular if every nonempty open set contains a nonempty regular closed set.

\begin{theorem}\rm If $X$ is an $n$-Hausdorff quasiregular homogeneous space with $n \geq 2$, then $|X|\leq 2^{c(X)\pi\chi(X)}$. 
\end{theorem}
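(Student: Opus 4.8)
The plan is to combine Theorem~\ref{nHdensity} with the standard fact that quasiregularity lets one control the density by the cellularity and $\pi$-weight, thereby collapsing the bound $|X|\leq d(X)^{\pi\chi(X)}$ down to $2^{c(X)\pi\chi(X)}$. Set $\kappa = c(X)\pi\chi(X)$. By Theorem~\ref{nHdensity} it suffices to show $d(X)\leq 2^\kappa$; then $|X|\leq d(X)^{\pi\chi(X)}\leq (2^\kappa)^{\pi\chi(X)} = 2^{\kappa\cdot\pi\chi(X)} = 2^\kappa$, using that all cardinal functions are multiplied by $\omega$ and $\pi\chi(X)\leq\kappa$.

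The key step is the inequality $d(X)\leq 2^{c(X)\pi w(X)}$, or even $d(X)\leq \pi w(X)\cdot c(X)$ via a maximal-disjoint-family argument, valid for quasiregular spaces; since $\pi w(X) = d(X)\pi\chi(X)$ we instead argue directly. First I would fix a $\pi$-base $\mathcal{P}$ and, using quasiregularity, inside each $P\in\mathcal{P}$ choose a nonempty open set $P'$ with $\overline{P'}\subseteq P$ (shrink to a regular closed set and take its interior). Then I would build, by transfinite recursion, a maximal family of the $P'$ with pairwise disjoint closures; such a family has size at most $c(X)$, and picking one point from each member yields a candidate for a dense set — more carefully, one shows that the union of all $P'$ appearing in such maximal families, ranging over all $P\in\mathcal{P}$, is dense, giving $d(X)\leq \pi w(X)\cdot c(X)$. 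Feeding in $\pi w(X) = d(X)\pi\chi(X)$ and $\pi\chi(X)\leq\kappa$ gives $d(X)\leq d(X)\cdot\kappa\cdot c(X)$, which is not yet a bound on $d(X)$ by itself; so instead I would use the cleaner known estimate that for quasiregular $X$, $d(X)\leq 2^{c(X)\cdot\pi\chi(X)}$ — this follows because a point-$(\leq 1)$-family (a cellular family) of shrunk $\pi$-base elements is bounded by $c(X)$ by Proposition~\ref{P1}, and a counting of how many dense sets such families can witness, organized by the local $\pi$-bases of size $\pi\chi(X)$, yields the exponential bound.

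The main obstacle is making the last estimate $d(X)\leq 2^{c(X)\pi\chi(X)}$ fully rigorous: one must be careful that quasiregularity (not full regularity) is enough to perform the shrinking $\overline{P'}\subseteq P$, and that the recursion producing the cellular family genuinely captures a dense subset rather than merely a $\pi$-base. I expect the argument to mirror the classical proof that $d(X)\le 2^{c(X)\chi(X)}$ for quasiregular spaces with $\chi$ replaced by $\pi\chi$ throughout, invoking Proposition~\ref{P1} (which gives $p_n(X)=c(X)$) at the point where one bounds the size of a point-finite family of basic open sets. Once $d(X)\leq 2^\kappa$ is in hand, the conclusion is immediate from Theorem~\ref{nHdensity} as computed above.
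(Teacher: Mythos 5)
Your reduction is exactly the paper's: set $\kappa=c(X)\pi\chi(X)$, invoke Theorem~\ref{nHdensity} to get $|X|\leq d(X)^{\pi\chi(X)}$, and then it suffices to bound $d(X)$ by $2^{\kappa}$ (in fact by $\pi\chi(X)^{c(X)}$). The genuine gap is that you never prove this density estimate. Your first attempt, $d(X)\leq \pi w(X)\cdot c(X)$, is (as you yourself note) useless here because $\pi w(X)=d(X)\pi\chi(X)$ makes it circular; moreover, picking one point from each member of a maximal cellular family does not in general produce a dense set. Your fallback --- ``a counting of how many dense sets such families can witness, organized by the local $\pi$-bases, yields the exponential bound'' --- is not an argument: the inequality $d(X)\leq \pi\chi(X)^{c(X)}$ is a \v{S}apirovskii-type theorem whose proof requires a genuine transfinite closing-off construction, and Proposition~\ref{P1} only restates that cellular families have size at most $c(X)$, which by itself says nothing about density. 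So the central step of your proof is asserted, flagged by you as ``the main obstacle,'' and left open.

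What you are missing is that the paper already supplies this step, with quasiregularity entering in a different and much simpler way: Theorem~\ref{Carlson} gives $d_\theta(X)\leq \pi\chi(X)^{c(X)}$ for an arbitrary space, and quasiregularity guarantees that every $\theta$-dense set is dense (given a nonempty open $U$, shrink to a nonempty open $V$ with $\overline{V}\subseteq U$; any point of $V$ lies in the $\theta$-closure of a $\theta$-dense set $A$, so $\overline{V}\cap A\neq\emptyset$, hence $U\cap A\neq\emptyset$), whence $d(X)=d_\theta(X)\leq\pi\chi(X)^{c(X)}\leq 2^{c(X)\pi\chi(X)}$. Combined with $|X|\leq d(X)^{\pi\chi(X)}$ this yields $|X|\leq\left(\pi\chi(X)^{c(X)}\right)^{\pi\chi(X)}=2^{c(X)\pi\chi(X)}$, which is exactly the paper's proof. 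To repair your write-up, replace the sketched cellular-family argument with the appeal to Theorem~\ref{Carlson} together with the observation $d(X)=d_\theta(X)$ for quasiregular spaces (or else supply a complete \v{S}apirovskii-style proof of $d(X)\leq\pi\chi(X)^{c(X)}$, which is substantially more work than your sketch suggests).
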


\begin{definition} \rm \cite{V}
Let $X$ be a space. For $A \subseteq X$ , the \emph{$\theta$-closure} of $A$ is defined by
\begin{center}
$cl_{\theta} (A) = \{ x \in  X: \overline{V} \cap  A \neq \emptyset \hbox{ for every open set } V \hbox{containing } x \}$
\end{center}
A set $A \subseteq X$ is \emph{$\theta$-dense} if $cl_{\theta}(A) = X$ . The \emph{$\theta$-density} $d_{\theta}(X)$ of $X$ is defined as the least cardinality of a $\theta$-dense subset of $X$.
\end{definition}

\begin{theorem} \rm \cite{CPR}
\label{Th53}
Let $X$ be an $n$-Urysohn homogeneous space, where $n \geq 2$. Then $|X| \leq d_{\theta}(X)^{\pi \chi (X)}$.
\end{theorem}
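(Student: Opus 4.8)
The plan is to mimic the proof of Theorem~\ref{CRth} in the homogeneous Hausdorff case, but replacing the density/weight machinery with its $\theta$-versions, which are the natural companions of the Urysohn separation property. Set $\kappa = \pi\chi(X)$ and let $D$ be a $\theta$-dense subset of $X$ with $|D| = d_\theta(X)$. Fix a point $p \in X$, a local $\pi$-base $\mathcal{B}$ at $p$ with $|\mathcal{B}| = \kappa$, and a family $\{h_x : x \in X\}$ of homeomorphisms with $h_x(p) = x$, which exists by homogeneity. For each $x \in X$ choose $B_x \in \mathcal{B}$ with $h_x(B_x)$ a (small) neighbourhood of $x$; more precisely, we want to arrange that the closures $\overline{h_x(B_x)}$ are controlled. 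The point is that $x \mapsto B_x$ is a function from $X$ into $\mathcal{B}$, a set of size $\le \kappa$, so if $|X| > d_\theta(X)^\kappa$ we can fish out a large set on which this assignment, together with a ``trace on $D$'' assignment, is constant — and that constancy must contradict the $n$-Urysohn property.

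Here is the structure I would follow. First, recall that by $\theta$-density, every point $x \in X$ satisfies: for every open $V \ni x$, $\overline{V} \cap D \neq \emptyset$. So for each $x$, and each basic neighbourhood of the form $h_x(B)$ with $B \in \mathcal{B}$, the set $\overline{h_x(B)} \cap D$ is nonempty; I would want to attach to $x$ not just $B_x$ but a point $d_x \in \overline{h_x(B_x)} \cap D$, giving a map $x \mapsto (B_x, d_x) \in \mathcal{B} \times D$. Second, suppose toward a contradiction that $|X| > d_\theta(X)^\kappa \ge |\mathcal{B} \times D| \cdot$(something); more carefully, one bounds $|X|$ by the number of relevant ``types'' and uses an Erd\H{o}s--Rado / elementary counting argument in the spirit of Theorem~\ref{ER} (for $n$-Urysohn one uses the $n$-ary partition theorem of the flavour of Theorem~\ref{ER2}, or a direct pigeonhole if one only needs to repeat a single value $n$ times). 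Extract an $n$-element set $\{x_1, \dots, x_n\}$ with $B_{x_1} = \cdots = B_{x_n} = B$ and (in the Urysohn refinement) with a common witness point $d \in D$ lying in every $\overline{h_{x_i}(B)}$. Third, translate this configuration back: choose, using the $n$-Urysohn property, neighbourhoods $U_i$ of $x_i$ with $\bigcap_{i=1}^n \overline{U_i} = \emptyset$, and arrange from the start (as in the proof of the previous theorem) that $h_{x_i}(B) \subseteq U_i$. Then $d \in \bigcap_i \overline{h_{x_i}(B)} \subseteq \bigcap_i \overline{U_i} = \emptyset$, the desired contradiction.

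The delicate point — and the part I expect to be the main obstacle — is the bookkeeping needed to make ``$h_{x_i}(B) \subseteq U_i$'' hold simultaneously for a whole $n$-tuple after the colouring argument, since the $U_i$'s depend on the tuple but $B$ was chosen per-point before the tuple was known. This is handled exactly as in the proof of the $3$-Hausdorff theorem above: one first defines, for every $n$-tuple of distinct points, neighbourhoods $U_i(x_1,\dots,x_n)$ with empty closure-intersection; then one sets $f(\{x_1,\dots,x_n\})$ to be a basic set $B(x_1,\dots,x_n) \in \mathcal{B}$ sitting inside the (open) set $\bigcap_i h_{x_i}^{\leftarrow}(U_i(x_1,\dots,x_n))$, which is a neighbourhood of $p$; the homogeneity guarantees this intersection is open and nonempty. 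Applying the partition theorem to $f$ (now a colouring of $[X]^n$ by $\le \kappa$ colours, valid once $|X|$ is large enough relative to $\kappa$) yields a large homogeneous set $Z$ on which $f$ is constantly equal to some $B$. For a final contradiction with the bound $d_\theta(X)^{\pi\chi(X)}$ rather than with $2^{\cdots}$, one then uses that within $Z$ the sets $\{h_z(B) : z \in Z\}$ have pairwise $\theta$-separated closures on $D$ in a way that forces $|Z| \le d_\theta(X)$, contradicting $|Z| = \kappa^+$ once $|X|$ exceeds $d_\theta(X)^{\pi\chi(X)}$; I would model this last counting step on the argument in Theorem~\ref{nHdensity} and its proof, substituting $d_\theta$ for $d$ and closures for the sets themselves throughout.
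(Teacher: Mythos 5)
There is a genuine gap, and it sits exactly where you push the argument through partition calculus. The Erd\H{o}s--Rado theorems (Theorems \ref{ER}, \ref{ER2}) only apply when $|X|$ exceeds a tower of exponentials of height $n-1$ above $\kappa$; but here your contradiction hypothesis is only $|X| > d_\theta(X)^{\pi\chi(X)}$, which can be far below that threshold (for instance $d_\theta(X)^{\pi\chi(X)}$ may equal $2^{\kappa}$ while the partition relation for triples already needs $(2^{2^{\kappa}})^{+}$ points), so the colouring step cannot even be launched. Moreover, even if it could, the homogeneous set $Z$ it returns has size $\kappa^{+}=\pi\chi(X)^{+}$, which need not exceed $d_\theta(X)$; so your final step ``$|Z|\leq d_\theta(X)$, contradicting $|Z|=\kappa^{+}$'' is not a contradiction at all, since $d_\theta(X)\geq\kappa^{+}$ is perfectly possible. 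The alternative coding you float in the first paragraph, $x\mapsto(B_x,d_x)$, is also insufficient: it has only $\pi\chi(X)\cdot d_\theta(X)$ values and comes with no mechanism bounding how many points share a value. In short, the partition-calculus route inherently produces tower-type bounds like Theorem \ref{n-1power}, not the bound $d_\theta(X)^{\pi\chi(X)}$. (Note the paper itself does not prove this statement; it is quoted from \cite{CPR}.)

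The repair is to drop Erd\H{o}s--Rado and code each point by a \emph{whole function} into $D$, which is essentially the argument of \cite{CPR}. Fix $p\in X$, a local $\pi$-base $\mathcal{B}$ at $p$ with $|\mathcal{B}|=\pi\chi(X)$, a $\theta$-dense set $D$ with $|D|=d_\theta(X)$, and homeomorphisms $h_x$ with $h_x(p)=x$. For each $x$ define $\phi_x:\mathcal{B}\to D$ by choosing $\phi_x(B)\in\overline{h_x(B)}\cap D$, which is nonempty because $h_x(B)$ is a nonempty open set and $D$ is $\theta$-dense. There are at most $d_\theta(X)^{\pi\chi(X)}$ such functions, and at most $n-1$ points can share one: if $x_1,\dots,x_n$ are distinct with $\phi_{x_1}=\dots=\phi_{x_n}$, use $n$-Urysohn to pick neighbourhoods $U_i$ of $x_i$ with $\bigcap_{i=1}^{n}\overline{U_i}=\emptyset$; then $\bigcap_{i=1}^{n}h_{x_i}^{\leftarrow}(U_i)$ is an open neighbourhood of $p$, so it contains some $B\in\mathcal{B}$, giving $h_{x_i}(B)\subseteq U_i$ for all $i$ --- this is precisely your ``common $B$'' trick, but applied after assuming equal codes rather than after a colouring --- and then the common value $\phi_{x_1}(B)$ lies in $\bigcap_{i=1}^{n}\overline{h_{x_i}(B)}\subseteq\bigcap_{i=1}^{n}\overline{U_i}=\emptyset$, a contradiction. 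Hence $x\mapsto\phi_x$ is at most $(n-1)$-to-one and $|X|\leq (n-1)\cdot d_\theta(X)^{\pi\chi(X)}=d_\theta(X)^{\pi\chi(X)}$, with no largeness hypothesis on $|X|$ needed.
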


\begin{theorem} \rm \cite{C} \label{Carlson}
Let $X$ be a space. Then $d_{\theta}(X) \leq \pi\chi(X)^{c(X)}$.
\end{theorem}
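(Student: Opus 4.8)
The plan is to first reduce to a convenient reformulation of $\theta$-density and then build a small $\theta$-dense set by a recursion of length $c(X)^{+}$ driven by maximal cellular families and local $\pi$-bases. I would begin by observing that a set $A\subseteq X$ is $\theta$-dense if and only if $A\cap\overline{V}\neq\emptyset$ for \emph{every} nonempty open $V\subseteq X$: indeed $x\notin cl_{\theta}(A)$ exactly when some open $V\ni x$ satisfies $\overline{V}\cap A=\emptyset$, so $cl_{\theta}(A)=X$ fails precisely when some nonempty open set has closure disjoint from $A$. Thus it suffices to produce a set of size at most $\pi\chi(X)^{c(X)}$ meeting $\overline{V}$ for every nonempty open $V$. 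Write $\kappa=\pi\chi(X)$, $\lambda=c(X)$, and fix for each $x\in X$ a local $\pi$-base $\mathcal{B}_{x}$ with $|\mathcal{B}_{x}|\le\kappa$.

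Next I would construct a tree $T$ of nonempty open subsets of $X$ together with a choice of point $x_{U}\in U$ for every node $U$. The root is $X$. A node $U$ is a \emph{leaf} if $x_{U}\in\overline{W}$ for every nonempty open $W\subseteq U$; otherwise the children of $U$ are the members of a maximal cellular family among the nonempty open $W\subseteq U$ with $x_{U}\notin\overline{W}$, so a non-leaf node has at most $\lambda$ children. At a limit level $\delta$, given a branch $\langle U_{\xi}:\xi<\delta\rangle$, one adjoins the level-$\delta$ nodes extending it by exploiting the local $\pi$-bases $\mathcal{B}_{x_{U_{\xi}}}$ (together, when $\bigcap_{\xi<\delta}\overline{U_{\xi}}\neq\emptyset$, with points of that intersection and their local $\pi$-bases). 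Along any branch the sets $U_{\xi}\setminus\overline{U_{\xi+1}}$ are nonempty (each contains $x_{U_{\xi}}$) and pairwise disjoint, hence form a cellular family; so every branch has length $\le\lambda$, the tree has at most $\kappa^{\lambda}$ nodes, and the set $A$ consisting of all points $x_{U}$ together with the $\pi$-base points introduced at limit stages has cardinality at most $\kappa^{\lambda}$.

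To finish I would show $A$ is $\theta$-dense. Suppose not, and fix a nonempty open $V$ with $\overline{V}\cap A=\emptyset$. Starting from the root: if $V$ meets a node $U$, then $V\cap U$ is a nonempty open subset of $U$ whose closure is disjoint from $A$, so $x_{U}\notin\overline{V\cap U}$; hence $U$ is not a leaf (otherwise $x_{U}\in\overline{V\cap U}\cap A$, a contradiction), and by maximality of the family of children $V\cap U$ meets some child $U'$, whence $V$ meets $U'$. So $V$ descends through the tree; using the limit-stage construction it must also meet a level-$\delta$ node along its branch at each limit $\delta$, so the branch it follows is unbounded, contradicting that all branches have length $\le\lambda<\lambda^{+}$. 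This gives $|X|$-irrelevant bound $d_{\theta}(X)\le\kappa^{\lambda}$.

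The main obstacle is precisely the limit-stage step: one must arrange the level-$\delta$ nodes so that every nonempty open set meeting all $U_{\xi}$ ($\xi<\delta$) and avoiding the closures of the points $x_{U_{\xi}}$ is forced to meet one of them. This is where the hypothesis $\pi\chi(X)$ is genuinely needed and not merely $c(X)$: note that $d_{\theta}(X)$ can be far smaller than $d(X)$ (for instance, a point whose only neighbourhood is the whole space is $\theta$-dense by itself), so the bound cannot be obtained by bounding $d(X)$; the local $\pi$-bases of the $x_{U_{\xi}}$ must be used to "complete" the descending chain at limits without blowing up the cardinality count. Verifying that this bookkeeping works is the technical heart; the reformulation, the successor steps, and the cardinal arithmetic are routine.
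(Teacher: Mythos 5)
The paper does not actually prove this statement---it is quoted from \cite{C}---so the only question is whether your argument stands on its own, and it does not: the decisive step is missing, and you say so yourself. Everything hinges on the limit levels of the tree, and limit levels cannot be avoided (branches are allowed $c(X)\geq\omega$ successor steps, so level $\omega$ is already needed). At a limit $\delta$ the bad open set $V$ meets every $U_{\xi}$, $\xi<\delta$, but the sets $V\cap U_{\xi}$ can shrink to nothing: $V\cap\bigcap_{\xi<\delta}U_{\xi}$ may well be empty, so there need be no nonempty open set contained in all the $U_{\xi}$ for $V$ to meet. If you instead take the level-$\delta$ nodes to be a maximal cellular family of open sets merely \emph{meeting} every $U_{\xi}$, you recover the property that $V$ meets some node, but you destroy the nesting $U_{\eta}\subseteq U_{\xi+1}$, and with it the pairwise disjointness of the sets $U_{\xi}\setminus\overline{U_{\xi+1}}$ on which your bound for the length of a branch rests. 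Appealing to the local $\pi$-bases $\mathcal{B}_{x_{U_{\xi}}}$ does not obviously repair this: a local $\pi$-base element at $x_{U_{\xi}}$ need not contain $x_{U_{\xi}}$, and since $\overline{V}$ misses every $x_{U_{\xi}}$, nothing forces $V$ to meet any prescribed choice of such elements. So the limit stage is not bookkeeping; it is an unresolved obstruction, and without it the successor-stage analysis proves nothing.

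Your reformulation of $\theta$-density (that $A$ is $\theta$-dense iff $A\cap\overline{V}\neq\emptyset$ for every nonempty open $V$), the successor step, and the cardinal arithmetic are all correct. But the known proof of this bound in \cite{C}, which adapts Shapirovskii's argument that $d(X)\leq\pi\chi(X)^{c(X)}$ for quasiregular spaces, avoids trees entirely: fix local $\pi$-bases $\mathcal{B}_{x}$ of size at most $\pi\chi(X)$ and run a closing-off recursion of length $c(X)^{+}$, at each stage adjoining, for every subfamily $\mathcal{V}$ of size at most $c(X)$ of the $\pi$-bases accumulated so far for which $\bigcup\{\overline{V}:V\in\mathcal{V}\}\neq X$, a point outside that union. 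A maximal cellular family (of size at most $c(X)$) refining the $\pi$-base elements that miss a putative open set with closure disjoint from the constructed set then yields the contradiction, and the set produced has size at most $\pi\chi(X)^{c(X)}$. I would abandon the tree and work out that closing-off argument instead.
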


By  Theorems \ref{Th53} and \ref{Carlson}, we obtain the following result.

\begin{theorem}\label{THU}\rm
Let $X$ be an $n$-Urysohn homogeneous space, where $n \geq 2$. Then $|X| \leq 2^{c(X) \pi \chi (X)}$.

\end{theorem}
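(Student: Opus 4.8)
The plan is to simply chain together the two results the paper has just recalled. By Theorem~\ref{Th53}, any $n$-Urysohn homogeneous space $X$ with $n \geq 2$ satisfies $|X| \leq d_\theta(X)^{\pi\chi(X)}$. By Theorem~\ref{Carlson}, for an arbitrary space we have $d_\theta(X) \leq \pi\chi(X)^{c(X)}$. Substituting the second bound into the first gives
\[
|X| \leq \left(\pi\chi(X)^{c(X)}\right)^{\pi\chi(X)} = \pi\chi(X)^{c(X)\pi\chi(X)}.
\]

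Next I would bound $\pi\chi(X)^{c(X)\pi\chi(X)}$ by $2^{c(X)\pi\chi(X)}$. The point is that $\pi\chi(X) \leq 2^{\pi\chi(X)} \leq 2^{c(X)\pi\chi(X)}$, since all cardinal functions here are taken to be at least $\omega$, so
\[
\pi\chi(X)^{c(X)\pi\chi(X)} \leq \left(2^{c(X)\pi\chi(X)}\right)^{c(X)\pi\chi(X)} = 2^{c(X)\pi\chi(X)\cdot c(X)\pi\chi(X)} = 2^{c(X)\pi\chi(X)},
\]
using that $c(X)\pi\chi(X)$ is an infinite cardinal and hence idempotent under multiplication. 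Combining the two displayed chains yields $|X| \leq 2^{c(X)\pi\chi(X)}$, which is exactly the claimed bound.

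There is essentially no obstacle here: the proof is a one-line cardinal arithmetic computation once Theorems~\ref{Th53} and~\ref{Carlson} are in hand, and the only thing to be careful about is the standing convention that cardinal functions are multiplied by $\omega$ (so that exponents are infinite and the usual rules $2^\kappa \cdot 2^\kappa = 2^\kappa$ and $\kappa \cdot \kappa = \kappa$ apply). I would state the proof in just two or three lines along exactly these steps.
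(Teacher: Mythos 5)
Your proposal is correct and follows essentially the same route as the paper, which obtains this theorem precisely by combining Theorem~\ref{Th53} with Theorem~\ref{Carlson} and the same cardinal arithmetic $\left(\pi\chi(X)^{c(X)}\right)^{\pi\chi(X)}=2^{c(X)\pi\chi(X)}$. You merely spell out the exponent computation (using that $c(X)\pi\chi(X)$ is infinite, hence idempotent) in more detail than the paper does.
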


\begin{proof}
As $X$ is $n$-Hausdorff and homogeneous, we have $|X|\leq d(X)^{\pi\chi(X)}$ by Theorem~\ref{nHdensity}. As $X$ is quasiregular, it follows that $d_\theta(X)=d(X)$. By Theorem~\ref{Carlson},  we have $d(X)\leq\pi\chi(X)^{c(X)}$. Thus, $|X|\leq d(X)^{\pi\chi(X)}\leq\left(\pi\chi(X)^{c(X)}\right)^{\pi\chi{X}}=2^{c(X)\pi\chi(X)}$.
\end{proof}

\section{A homogenous extension of an $n$-Hausdorff space}
In \cite{CPR2} Carlson, Porter and Ridderbos proved the following result.
\begin{theorem}\rm\cite{CPR2}
	Let $X$ be a Hausdorff space. Then $X$ can be embedded
	in a homogeneous space that is the countable union of $H$-closed spaces.
\end{theorem}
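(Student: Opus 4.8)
The plan is to reduce the statement to the case of an $H$-closed space via the Katetov extension, and then to embed an arbitrary $H$-closed space into a homogeneous space assembled from countably many $H$-closed blocks.

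First I would reduce to $H$-closed spaces. Recall that the Katetov extension $\kappa X$ of a Hausdorff space $X$ is $H$-closed and contains $X$ as an open dense subspace. Since composing the inclusion $X\hookrightarrow\kappa X$ with any embedding of $\kappa X$ into a space $Z$ yields an embedding of $X$ into $Z$, it suffices to prove the theorem with $X$ replaced by $Y:=\kappa X$; moreover every $H$-closed space arises in this way, since $\kappa Y=Y$ whenever $Y$ is $H$-closed. So the remaining task is to embed an arbitrary $H$-closed space $Y$ into a homogeneous space which is a countable union of $H$-closed spaces. Throughout I would use the standard toolkit: (a) an arbitrary product of $H$-closed spaces is $H$-closed; (b) a Hausdorff continuous image of an $H$-closed space is $H$-closed; (c) a regular closed subspace of an $H$-closed space is $H$-closed. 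By (a) every power of $Y$ is $H$-closed --- this supplies the blocks --- and (b) and (c) are used to verify that the blocks remain $H$-closed in the subspace topology of the space built around them.

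The homogenization is the heart of the argument. The naive candidate $Z=Y^{\omega}$ --- which is $H$-closed by (a), hence trivially a countable union of $H$-closed spaces, and contains $Y$ --- does not suffice in general: homogeneity of an $\omega$-th power is genuinely possible when the base is not homogeneous (the Hilbert cube $[0,1]^{\omega}$ and the Cantor set $(\omega+1)^{\omega}$ are homogeneous although $[0,1]$ and $\omega+1$ are not), but it can fail, for instance $(\omega_{1}+1)^{\omega}$ is not homogeneous --- the constant sequence with value $0$ has $\pi$-character $\omega$ while the constant sequence with value $\omega_{1}$ has $\pi$-character $\omega_{1}$ --- and $\omega_{1}+1$ is compact Hausdorff, hence $H$-closed. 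So I would instead build $Z$ as the union of countably many blocks $Z_{0},Z_{1},Z_{2},\dots$, each a copy of a finite power of $Y$ and regular closed in $Z$ (so that, by (c), $Z$ is a countable union of $H$-closed spaces and contains a copy of $Y$), glued together along copies of a fixed point $p\in Y$ according to a combinatorial pattern with a rich symmetry group --- in the spirit in which the infinite-dimensional sphere $S^{\infty}$, realized as the increasing union of the spheres $S^{n}$, acquires a transitive group of homeomorphisms only in the limit. One then has to check that $Z$, with the weak topology determined by its blocks, is Hausdorff, that each $Z_{n}$ is $H$-closed in the subspace topology, and that the symmetries of the pattern, together with the reindexing homeomorphisms built into the gluing, act transitively on $Z$.

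I expect transitivity to be the main obstacle. Since $Y=\kappa X$ need not be homogeneous, this transitivity cannot be produced ``coordinatewise'': permuting blocks and applying self-homeomorphisms of $Y$ in the coordinates can never carry an arbitrary point of $Z$ to a fixed base point, because a self-homeomorphism of $Y$ fixing $p$ cannot move a value distinct from $p$ onto $p$. The gluing itself must therefore supply the missing mobility, in the form of ``gliding'' homeomorphisms that slide the support of an arbitrary point of $Z$ through the successive blocks and ultimately onto a fixed base point of a fixed block, so that the inhomogeneity of $Y$ is absorbed by the combinatorics of the gluing rather than by self-maps of $Y$. Arranging the pattern of blocks so that the resulting space is simultaneously Hausdorff, a countable union of $H$-closed spaces, and homogeneous is the crux; I would look for the required gliding maps by exploiting the explicit structure of $\kappa X$ --- the open dense copy of $X$ and the description of $\kappa X\setminus X$ as the set of free open ultrafilters on $X$ --- to manufacture the identifications between blocks.
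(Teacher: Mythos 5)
Your reduction to the $H$-closed case via the Kat\v{e}tov extension $\kappa X$ is sound, and you correctly identify that producing transitivity is the heart of the matter --- but the proposal stops exactly there. The gluing scheme (finite powers of $Y$ wedged along copies of a fixed point, with ``gliding'' homeomorphisms to be found later) is not a proof: a wedge of blocks at a point has that point as a topologically distinguished vertex, you give no mechanism for moving an arbitrary point onto it, and you yourself flag this as ``the crux''. The missing idea is Uspenski\v{i}'s theorem: for any space $Y$ there is a $Z\subseteq Y^{\kappa}$ (the set of functions $f\in{}^{A}Y$ all of whose fibers have cardinality $\kappa=|A|\geq|Y|$) such that ${\bf H}(Y)=Y\times Z$ is homogeneous. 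This supplies transitivity outright, with no gluing combinatorics at all.

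There is also a structural gap: a single application of $\kappa$ at the start cannot suffice, because homogenization destroys $H$-closedness. The space ${\bf H}(\kappa X)=\kappa X\times Z$ is not $H$-closed (the factor $Z$ is a proper dense subspace of a power of $\kappa X$), and it is not visibly a countable union of $H$-closed subspaces; your tools (a)--(c) do not repair this. The actual argument alternates the two operations: set $H_{1}={\bf H}(\kappa X)$, $H_{m+1}={\bf H}(\kappa H_{m})$, and let $H=\bigcup_{m}H_{m}$ carry the weak topology determined by the $H_{m}$. Then $H=\bigcup_{m}\kappa H_{m}$ exhibits $H$ as a countable union of $H$-closed spaces, and homogeneity of $H$ follows because any two points lie in some $H_{m}$, which is homogeneous by Uspenski\v{i}, and any homeomorphism $h$ of $H_{m}$ extends first to $\kappa H_{m}$ (a homeomorphism permutes the free open ultrafilters and hence the adjoined points) and then to $H_{m+1}$ as $\kappa h\times id_{Z}$; the union of these successive extensions is a homeomorphism of $H$. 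These two extension lemmas, rather than the symmetry of a gluing pattern, are what make the limit homogeneous.
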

In the following we construct (Theorem \ref{Theorem} below) a homogeneous extension of an $n$-Hausdorff space, $n \geq 2$, which is a countable union of $n$-H-closed spaces.

\begin{definition} \rm \cite{BBC}
Let $n \geq 2$. An $n$-Hausdorff space $X$ is called \emph{n-H-closed} if $X$ is closed in every $n$-Hausdorff space $Y$ in which $X$ is embedded.
\end{definition}

Given a space $X$ and an ultrafilter $\mathcal{U}$ on it, we put $a{\cal U}= \bigcap \{ \overline{U} : U \in \mathcal{U} \}$.

For an $n$-Hausdorff space $X$, with $n \geq 2$, an open ultrafilter $\mathcal{U}$ on $X$ is said to be \emph{full} if $|a \mathcal{U}|$ $= n -1$.

\begin{theorem} \rm \cite{BBC} 
Let $n \geq 2$, and $X$ be a space. The following are equivalent:
\begin{itemize}
\item[(a)] $X$ is $n$-Hausdorff
\item[(b)] if $\mathcal{U}$ is an open ultrafilter of $X$, then $|a \mathcal{U}|$ $\leq n -1$.
\end{itemize}

\end{theorem}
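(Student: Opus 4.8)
The plan is to establish the two implications separately, leaning throughout on the one standard fact about open ultrafilters: if $\mathcal{U}$ is a maximal filter of nonempty open sets and $V$ is an open set meeting every member of $\mathcal{U}$ (i.e.\ $V \cap U \neq \emptyset$ for all $U \in \mathcal{U}$), then $V \in \mathcal{U}$; otherwise $\mathcal{U} \cup \{\,V \cap U : U \in \mathcal{U}\,\}$ would generate an open filter of nonempty sets strictly larger than $\mathcal{U}$. I would also note that the open filter generated by any base of nonempty open sets closed under finite intersections extends, by Zorn's lemma, to an open ultrafilter.

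For (a) $\Rightarrow$ (b) I would argue by contradiction. Assume $X$ is $n$-Hausdorff and let $\mathcal{U}$ be an open ultrafilter with $|a\mathcal{U}| \geq n$. Choose distinct points $x_1,\dots,x_n \in a\mathcal{U}$; since $H(X) \leq n$, there are open sets $W_i \ni x_i$ with $\bigcap_{i=1}^{n} W_i = \emptyset$. Because $x_i \in a\mathcal{U} \subseteq \overline{U}$ for every $U \in \mathcal{U}$ and $W_i$ is a neighbourhood of $x_i$, we have $W_i \cap U \neq \emptyset$ for all $U \in \mathcal{U}$, hence $W_i \in \mathcal{U}$ by the fact above. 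Then $\emptyset = \bigcap_{i=1}^{n} W_i \in \mathcal{U}$, contradicting that the members of $\mathcal{U}$ are nonempty. Thus every open ultrafilter satisfies $|a\mathcal{U}| \leq n-1$.

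For (b) $\Rightarrow$ (a) I would prove the contrapositive. If $X$ is not $n$-Hausdorff then $H(X) > n$, so there is a set $A_0 \subseteq X$, $|A_0| \geq n$, such that every choice of open neighbourhoods of the points of $A_0$ has nonempty intersection. Fixing an $n$-element subset $A = \{x_1,\dots,x_n\} \subseteq A_0$ and taking $X$ itself as the neighbourhood at the remaining points of $A_0$, one sees that already $\bigcap_{i=1}^n V_i \neq \emptyset$ for every choice of open $V_i \ni x_i$. Hence $\mathcal{B} = \{\, \bigcap_{i=1}^n V_i : V_i \ni x_i \text{ open} \,\}$ is a base of nonempty open sets closed under finite intersections, and it contains every open neighbourhood of each $x_i$. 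Extend the open filter generated by $\mathcal{B}$ to an open ultrafilter $\mathcal{U} \supseteq \mathcal{B}$. Given $i$ and $U \in \mathcal{U}$: any open neighbourhood $W$ of $x_i$ lies in $\mathcal{B} \subseteq \mathcal{U}$, so $W \cap U \in \mathcal{U}$ is nonempty; thus $x_i \in \overline{U}$. As $U$ was arbitrary, $x_i \in a\mathcal{U}$, so $|a\mathcal{U}| \geq n > n-1$, contradicting (b).

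The whole argument is short once the maximality characterization of open ultrafilters is in hand; the only mildly delicate point is the passage from an arbitrary witness set $A_0$ of size at least $n$ for the failure of $n$-Hausdorffness to one of size exactly $n$ (so that the filter base $\mathcal{B}$ has a clean finite description), and this is immediate from the definition of $H(X)$ by padding with copies of $X$. I do not anticipate a genuine obstacle.
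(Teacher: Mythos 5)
Your argument is correct and complete: the maximality characterization of open ultrafilters gives (a)$\Rightarrow$(b) cleanly, and for (b)$\Rightarrow$(a) your filter base $\mathcal{B}$ built from an $n$-point witness set (padding the remaining points of $A_0$ with $X$) is nonempty-valued and extends to an open ultrafilter whose adherence contains all $n$ points. The paper itself states this theorem only as a quoted result from \cite{BBC} and supplies no proof, so there is nothing to compare against; your proof is the standard argument one would expect for this equivalence and I see no gap in it.
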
\begin{theorem} \rm \cite{BBC} 
Let $n \geq 2$, and $X$ be an $n$-Hausdorff space. The following are equivalent:
\begin{itemize}
\item[(a)] $X$ is $n$-H-closed
\item[(b)] every open ultrafilter on $X$ is full.
\end{itemize}
\end{theorem}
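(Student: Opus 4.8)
The plan is to prove both implications directly, in parallel with the classical characterization of $H$-closed Hausdorff spaces by convergence of open ultrafilters, the role of convergence being played here by fullness, i.e. by the condition $|a\mathcal{U}| = n-1$. Throughout I will use the ultrafilter characterization of $n$-Hausdorffness stated just above (a space $Y$ is $n$-Hausdorff iff $|a\mathcal{V}| \le n-1$ for every open ultrafilter $\mathcal{V}$ on $Y$, equivalently iff every $A \subseteq Y$ with $|A| \ge n$ admits open neighbourhoods $U_a \ni a$ with $\bigcap_{a\in A} U_a = \emptyset$), the existence of open ultrafilters extending any given open filter, and the basic fact that an open ultrafilter $\mathcal{U}$ is a maximal open filter, so any open set meeting every member of $\mathcal{U}$ already belongs to $\mathcal{U}$.

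For (b) $\Rightarrow$ (a), let $Y$ be an $n$-Hausdorff space containing $X$ as a subspace and suppose, toward a contradiction, that there is $y \in \overline{X}^Y \setminus X$. The traces $\{V \cap X : V \text{ open in } Y,\ y \in V\}$ form an open filter base on $X$ (its members are nonempty since $y \in \overline{X}$); extend it to an open ultrafilter $\mathcal{U}$ on $X$. By (b), $\mathcal{U}$ is full, so $a\mathcal{U} = \{x_1, \dots, x_{n-1}\}$ for distinct points $x_i \in X$, and since $y \notin X$ the set $\{y, x_1, \dots, x_{n-1}\}$ has exactly $n$ elements. As $Y$ is $n$-Hausdorff there are open neighbourhoods $W_y \ni y$ and $W_i \ni x_i$ in $Y$ with $W_y \cap W_1 \cap \dots \cap W_{n-1} = \emptyset$. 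Now $W_y \cap X \in \mathcal{U}$, being a trace; and the crucial step is that each $W_i \cap X$ also lies in $\mathcal{U}$, because $x_i \in a\mathcal{U}$ forces every open neighbourhood of $x_i$ in $X$ — in particular $W_i \cap X$ — to meet every member of $\mathcal{U}$, hence to lie in $\mathcal{U}$ by maximality. Then $(W_y \cap X) \cap (W_1 \cap X) \cap \dots \cap (W_{n-1} \cap X) \in \mathcal{U}$ is nonempty, while it is contained in $W_y \cap W_1 \cap \dots \cap W_{n-1} = \emptyset$, a contradiction. Hence $X$ is closed in $Y$, so $X$ is $n$-H-closed.

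For (a) $\Rightarrow$ (b) I argue the contrapositive. Suppose some open ultrafilter $\mathcal{U}$ on $X$ is not full; since $X$ is $n$-Hausdorff we have $|a\mathcal{U}| \le n-1$, hence in fact $|a\mathcal{U}| \le n-2$. Put $Y = X \cup \{y\}$ with $y \notin X$, and declare $W \subseteq Y$ open iff $W \cap X$ is open in $X$ and, whenever $y \in W$, $W \cap X \in \mathcal{U}$; a routine check shows this is a topology making $X$ an open subspace carrying its original topology, and $y \in \overline{X}^Y$ because every basic neighbourhood $\{y\} \cup U$ of $y$ has $U \ne \emptyset$, so $X$ is not closed in $Y$. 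It remains to verify $Y$ is $n$-Hausdorff: let $A \subseteq Y$ with $|A| \ge n$. If $y \notin A$, or if $y \in A$ and $|A \cap X| \ge n$, then $n$-Hausdorffness of $X$ supplies neighbourhoods inside $X$ (together with any neighbourhood of $y$) with empty intersection. Since $|A| \ge n$ the only remaining case is $A = \{y, x_1, \dots, x_{n-1}\}$ with the $x_i \in X$ distinct; as $|a\mathcal{U}| \le n-2 < n-1$, some $x_j \notin a\mathcal{U}$, so there are $U_j \in \mathcal{U}$ and an open $G_j \ni x_j$ in $X$ with $G_j \cap U_j = \emptyset$. Taking $\{y\} \cup U_j$ for $y$, $G_j$ for $x_j$, and $X$ for each other $x_i$, the intersection is contained in $U_j \cap G_j = \emptyset$. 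Thus $Y$ is $n$-Hausdorff and $X$ embeds in it non-closedly, so $X$ is not $n$-H-closed.

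I expect the main obstacle to be the verification that the one-point extension in the (a) $\Rightarrow$ (b) direction is genuinely $n$-Hausdorff, precisely in the borderline case where $A$ contains $y$ together with exactly $n-1$ points of $X$; that is the only place where the failure of fullness — the strict inequality $|a\mathcal{U}| \le n-2$ — is actually used, and getting the bookkeeping right there is the heart of the argument. A secondary point needing care is the observation in (b) $\Rightarrow$ (a) that membership of $x_i$ in $a\mathcal{U}$ drives the traced neighbourhoods $W_i \cap X$ into $\mathcal{U}$, which is what turns the defining property of $n$-Hausdorffness of $Y$ into a violation of the finite intersection property of the filter $\mathcal{U}$. (Alternatively, the contrapositive of (a) $\Rightarrow$ (b) can be obtained at one stroke from the $n$-Katetov extension of $X$, at the cost of first developing that construction.)
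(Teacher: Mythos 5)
The paper does not prove this statement --- it is quoted from \cite{BBC} without proof --- so there is no in-paper argument to compare against; judged on its own, your proof is correct. Both directions are sound: in (b)$\Rightarrow$(a) the key step that $x_i\in a\mathcal{U}$ forces $W_i\cap X\in\mathcal{U}$ by maximality is exactly right, and in (a)$\Rightarrow$(b) the one-point extension with neighbourhood trace in $\mathcal{U}$, using $|a\mathcal{U}|\leq n-2$ only in the borderline case $A=\{y,x_1,\dots,x_{n-1}\}$, does verify $n$-Hausdorffness of $Y$. Note that this extension is just the single-ultrafilter version of the construction the paper recalls from \cite{BBC} right after the statement (adding points $p_{\alpha i}$ for each non-full ultrafilter), so your argument is in the same spirit as the source.
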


Recall the following construction, made in \cite{BBC}. Let $n \geq 2$, $X$ be an $n$-Hausdorff space and ${\frak U} = \{ {\cal U} : {\cal U} \hbox{ is an open ultrafilter such that } |a {\cal U}| < n-1 \}$. We index ${\frak U}$ by ${\frak U} = \{ {\cal U}_{\alpha} : \alpha \in |{\frak U}|\}$. For each $\alpha \in |{\frak U}|$, let $k \alpha = (n-1)-|a{\cal U}_{\alpha}|$ and $\{ p_{\alpha i} : 1 \leq i \leq k \alpha \}$
be a set of distinct points disjoint from $X$. Let $Y = X \cup \{ p_{\alpha i} : 1 \leq i \leq k \alpha, \alpha \in |{\frak U}|\}$. A set $V$ is defined to be open in $Y$ if $V \cap X$ is open in $X$ and if $p_{\alpha i} \in V$ for $1 \leq i \leq k \alpha$, $V \cap X \in {\cal U}_{\alpha}$. The space $Y$ is an $n$-Hausdorff space.

In the following results we use the notation of the previous contruction.
\begin{prop} \rm \cite{BBC}
For every $\alpha\in |{\frak U}|$, 
$${\cal U}_{\alpha} = \{ V \cap X :  p_{\alpha i} \in V\in \tau(Y)\hbox{ for some } 1 \leq i \leq k \alpha\},$$ where $\tau(Y)$ is the topology on $Y$. 
\end{prop}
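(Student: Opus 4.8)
The plan is to prove the two inclusions separately; each one is short once the definition of the topology $\tau(Y)$ from the construction preceding the statement is unwound. For brevity write $R_\alpha = \{ V \cap X : p_{\alpha i} \in V \in \tau(Y) \text{ for some } 1 \le i \le k\alpha\}$ for the set on the right-hand side, so that the goal is ${\cal U}_\alpha = R_\alpha$.

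First I would check $R_\alpha \subseteq {\cal U}_\alpha$, which is immediate from the definition of $\tau(Y)$: if $V \in \tau(Y)$ and $p_{\alpha i} \in V$ for some $i$ with $1 \le i \le k\alpha$, then the defining clause for open sets of $Y$ forces $V \cap X \in {\cal U}_\alpha$. Hence every element of $R_\alpha$ lies in ${\cal U}_\alpha$.

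For the reverse inclusion ${\cal U}_\alpha \subseteq R_\alpha$ I would exhibit, for each $W \in {\cal U}_\alpha$, an explicit open subset of $Y$ witnessing $W \in R_\alpha$. Since ${\cal U}_\alpha$ is an open ultrafilter on $X$, each of its members --- in particular $W$ --- is open in $X$. Put $V = W \cup \{ p_{\alpha i} : 1 \le i \le k\alpha\}$; here the set of adjoined points is nonempty because $k\alpha = (n-1) - |a{\cal U}_\alpha| \ge 1$, as ${\cal U}_\alpha \in {\frak U}$. Then $V \cap X = W$ is open in $X$, and the only points of $Y \setminus X$ belonging to $V$ are the $p_{\alpha i}$ with $1 \le i \le k\alpha$ (no point $p_{\beta j}$ with $\beta \ne \alpha$ is involved), and for each of these $V \cap X = W \in {\cal U}_\alpha$. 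Therefore $V \in \tau(Y)$, $p_{\alpha 1} \in V$ and $V \cap X = W$, which shows $W \in R_\alpha$.

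No real obstacle is anticipated: the statement is essentially a reformulation of the definition of the topology on $Y$. The only details that deserve a moment of attention are that the members of ${\cal U}_\alpha$ are already open in $X$ (so the witness $V$ is genuinely open in $Y$), that the index range $\{1, \dots, k\alpha\}$ is nonempty, and that $V$ contains no extraneous adjoined point $p_{\beta j}$ with $\beta \ne \alpha$ --- all of which follow directly from the construction.
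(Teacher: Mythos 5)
Your proof is correct: the inclusion $R_\alpha\subseteq{\cal U}_\alpha$ is exactly the defining clause of $\tau(Y)$, and your witness $V=W\cup\{p_{\alpha i}:1\le i\le k\alpha\}$ (with the observation that $k\alpha\ge 1$ and that no $p_{\beta j}$ with $\beta\ne\alpha$ intrudes) settles the reverse inclusion. The paper itself only cites this proposition from \cite{BBC} without reproducing an argument, and your direct unwinding of the definition is the standard proof.
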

By the previous proposition the space $Y$ has the property that every open ultrafilter on $Y$ is full. Indeed the points $p_{\alpha i}$, $1 \leq i \leq k \alpha$, added to the space $X$, are in the closure of each element of ${\cal U}_\alpha$. Therefore the space $Y$ is $n$-H-closed. 

\begin{definition}\label{projlarg}  \rm \cite{BBC}
Let $n \geq 2$, $S$ and $T$ be $n$-H-closed extensions of an $n$-Hausdorff space $X$. We say $S$ is \textit{projectively larger} than $T$ if there is a continuous surjection $f : S \rightarrow T$ such that $f(x) = x$ for $x \in X$.
\end{definition}

This projectively larger function may not be unique \cite{BBC}.

\begin{theorem} \rm \cite{BBC}
\label{T5}
Let $n \geq 2$, $X$ be $n$-Hausdorff space and $Y$ be the $n$-H-closed extension of $X$ constructed above. If $Z$ is an $n$-H-closed extension of $X$, there is a continuous surjection $f : Y \rightarrow Z$ such that $f(x) = x$ for all $x \in X$.
\end{theorem}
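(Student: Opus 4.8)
The plan is to take $f$ to be the identity on $X$ and to send each added point $p_{\alpha i}$ to a suitable point of the remainder $Z\setminus X$. Since $Z$ is an extension of $X$, the subspace $X$ is dense in $Z$, and the whole argument rests on the elementary identity $\mathrm{cl}_Z V=\mathrm{cl}_Z(V\cap X)$, valid for every open $V\subseteq Z$ (one has $V\subseteq Z\setminus\mathrm{cl}_Z(X\setminus V)$, and $X$ is dense, so $\mathrm{cl}_Z V\subseteq\mathrm{cl}_Z(V\cap X)$). For $W$ open in $X$ write $\mathrm{ex}_Z W=Z\setminus\mathrm{cl}_Z(X\setminus W)$, the largest open subset of $Z$ whose trace on $X$ is $W$; note $W\subseteq\mathrm{ex}_Z W$. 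For each $\alpha\in|\mathfrak{U}|$ set $a_Z\mathcal{U}_\alpha=\bigcap\{\mathrm{cl}_Z U:U\in\mathcal{U}_\alpha\}$.

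I would first record two facts linking open ultrafilters to the dense embedding $X\subseteq Z$. (i) If $\mathcal{V}$ is an open ultrafilter on $Z$, then its trace $\{V\cap X:V\in\mathcal{V}\}$ is an open ultrafilter on $X$: it is a filter of nonempty open subsets of $X$ (nonemptiness by density, upward closure because $V\cap X\subseteq W$ forces $V\subseteq\mathrm{ex}_Z W$), and it is maximal, since for open $W\subseteq X$ either $\mathrm{ex}_Z W\in\mathcal{V}$ and $W$ lies in the trace, or some $V\in\mathcal{V}$ misses $\mathrm{ex}_Z W\supseteq W$. (ii) The family $\{\mathrm{ex}_Z U:U\in\mathcal{U}_\alpha\}$ is a filter base of nonempty open sets of $Z$, so it extends to an open ultrafilter $\mathcal{V}$ on $Z$, and by maximality of $\mathcal{U}_\alpha$ the trace of $\mathcal{V}$ is exactly $\mathcal{U}_\alpha$. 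Combining (i)--(ii) with the closure identity, for every open ultrafilter $\mathcal{V}$ on $Z$ whose trace is $\mathcal{U}_\alpha$ one gets
\[
a\mathcal{V}=\bigcap_{V\in\mathcal{V}}\mathrm{cl}_Z V=\bigcap_{V\in\mathcal{V}}\mathrm{cl}_Z(V\cap X)=\bigcap_{U\in\mathcal{U}_\alpha}\mathrm{cl}_Z U=a_Z\mathcal{U}_\alpha,
\]
so $a_Z\mathcal{U}_\alpha$ does not depend on the choice of $\mathcal{V}$. Since $Z$ is $n$-H-closed, such a $\mathcal{V}$ is full, hence $|a_Z\mathcal{U}_\alpha|=|a\mathcal{V}|=n-1$; and $a_Z\mathcal{U}_\alpha\cap X=\bigcap_{U\in\mathcal{U}_\alpha}\mathrm{cl}_X U=a\mathcal{U}_\alpha$ has $(n-1)-k\alpha$ points. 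Therefore $a_Z\mathcal{U}_\alpha\setminus X$ has exactly $k\alpha$ points.

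Now define $f:Y\to Z$ by $f\restriction X=\mathrm{id}_X$ and, for each $\alpha$, by choosing a bijection from $\{p_{\alpha 1},\dots,p_{\alpha,k\alpha}\}$ onto the $k\alpha$-element set $a_Z\mathcal{U}_\alpha\setminus X$. Continuity at a point $x\in X$ is immediate, since $X$ is open in $Y$ with its original topology and $f\restriction X$ is the inclusion into $Z$. For continuity at $p_{\alpha i}$, let $G$ be open in $Z$ with $f(p_{\alpha i})\in G$. As $f(p_{\alpha i})\in a_Z\mathcal{U}_\alpha\subseteq\mathrm{cl}_Z U$ for every $U\in\mathcal{U}_\alpha$, the open set $G$ meets every $U\in\mathcal{U}_\alpha$, hence by density of $X$ meets each such $U$ within $X$; maximality of $\mathcal{U}_\alpha$ then forces $G\cap X\in\mathcal{U}_\alpha$, so $\{p_{\alpha i}\}\cup(G\cap X)$ is a basic open neighbourhood of $p_{\alpha i}$ in $Y$ that $f$ maps into $G$. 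Thus $f$ is continuous.

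It remains to prove surjectivity, i.e. to hit every $z\in Z\setminus X$. Extend the filter of open neighbourhoods of $z$ in $Z$ to an open ultrafilter $\mathcal{V}$ on $Z$; then $\mathcal{V}$ converges to $z$, so $z\in a\mathcal{V}$. Let $\mathcal{U}$ be the trace of $\mathcal{V}$ on $X$, an open ultrafilter by (i). From $a\mathcal{U}\subseteq a\mathcal{V}\cap X$, $z\in a\mathcal{V}\setminus X$, and $|a\mathcal{V}|=n-1$ (fullness of $Z$), we obtain $|a\mathcal{U}|\leq n-2<n-1$, so $\mathcal{U}=\mathcal{U}_\alpha$ for some $\alpha\in|\mathfrak{U}|$; then $a\mathcal{V}=a_Z\mathcal{U}_\alpha$ by the displayed identity, whence $z\in a_Z\mathcal{U}_\alpha\setminus X=f(\{p_{\alpha 1},\dots,p_{\alpha,k\alpha}\})$. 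The main obstacle, and the core of the proof, is exactly this identity $a\mathcal{V}=a_Z\mathcal{U}_\alpha$ for all open ultrafilters $\mathcal{V}$ on $Z$ restricting to $\mathcal{U}_\alpha$: it makes the placement of the $p_{\alpha i}$ canonical, pins down that precisely $k\alpha$ remainder points of $Z$ arise from $\mathcal{U}_\alpha$, and simultaneously drives the continuity and the surjectivity steps, while everything else is routine bookkeeping with density and maximality of open ultrafilters.
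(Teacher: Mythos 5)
Your argument is correct: the identity $a\mathcal{V}=a_Z\mathcal{U}_\alpha$ for every open ultrafilter $\mathcal{V}$ on $Z$ tracing to $\mathcal{U}_\alpha$, together with fullness of ultrafilters on the $n$-H-closed space $Z$, does give exactly $k\alpha$ remainder points over each $\mathcal{U}_\alpha$, and your continuity and surjectivity steps go through. The paper itself states this theorem as a citation from \cite{BBC} without reproducing a proof, and your construction (send each $p_{\alpha i}$ into $a_Z\mathcal{U}_\alpha\setminus X$, identity on the dense subspace $X$) is essentially the standard Kat\v{e}tov-style argument underlying that result, so there is nothing to flag.
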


Theorem \ref{T5} shows that the $n$-H-closed extension Y of X is projectively larger than every
$n$-H-closed extension of X. Moreover, the space Y has an interesting unique property as it is noted in the next result.

\begin{theorem} \rm \cite{BBC}
\label{T6}
Let $n \geq 2$, $X$ be an $n$-Hausdorff space and $Y$ be the $n$-H-closed extension of $X$ described above. Let $f : Y \rightarrow Y$ be a continuous surjection such that $f(x) = x$ for all $x \in X$. Then $f$ is a homeomorphism.
\end{theorem}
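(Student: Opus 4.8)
The plan is to show that any continuous surjection $f : Y \to Y$ fixing $X$ pointwise must be injective, since then $f$ will be a continuous bijection and we can check openness directly. The space $Y$ differs from $X$ only by the finitely-many added points $p_{\alpha i}$, $1 \le i \le k\alpha$, one batch for each open ultrafilter $\mathcal{U}_\alpha$ with $|a\mathcal{U}_\alpha| < n-1$. Since $X$ is dense in $Y$ (every basic open set $V$ with $p_{\alpha i}\in V$ satisfies $V\cap X\in\mathcal{U}_\alpha\ne\emptyset$), the map $f$ is completely determined by its behaviour on the added points, and each $f(p_{\alpha i})$ is again either a point of $X$ or some added point $p_{\beta j}$.

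First I would rule out $f(p_{\alpha i}) \in X$. Suppose $f(p_{\alpha i}) = x \in X$. Since $Y$ is $n$-Hausdorff and $x$ together with the points $a\mathcal{U}_\alpha$ (and, if needed, other added points in that batch) can be separated appropriately, one uses continuity: $f^{\leftarrow}$ of a neighbourhood of $x$ must contain a neighbourhood of $p_{\alpha i}$, hence an element of $\mathcal{U}_\alpha$ together with $p_{\alpha i}$; tracing this through, $x$ would have to lie in the closure of every element of $\mathcal{U}_\alpha$, i.e. $x\in a\mathcal{U}_\alpha$. But the points $p_{\alpha i}$ were added precisely because $|a\mathcal{U}_\alpha|$ was too small, and $a\mathcal{U}_\alpha\subseteq X$ already; comparing the closure of $\mathcal{U}_\alpha$ computed in $Y$ (which now also contains the $p_{\alpha i}$) with the requirement that $f$ be a surjection onto all of $Y$ forces a contradiction with the $n$-Hausdorff property of $Y$, since we would get $n$ points — the $k\alpha$ added points $p_{\alpha i}$, together with $a\mathcal{U}_\alpha$ — all non-separ[able, exceeding $n-1$. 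Hence $f(p_{\alpha i}) = p_{\beta(i) j(i)}$ for some index. A symmetric argument applied to a point $q\in Y$ with $f(q)=p_{\alpha i}$ shows $q$ must itself be an added point.

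Next I would show the assignment $p_{\alpha i}\mapsto f(p_{\alpha i})$ respects the batch structure: all $k\alpha$ points $p_{\alpha 1},\dots,p_{\alpha,k\alpha}$ must be sent into the batch $\{p_{\beta 1},\dots,p_{\beta,k\beta}\}$ of a single $\beta$, with $a\mathcal{U}_\alpha = a\mathcal{U}_\beta$ and hence $k\alpha = k\beta$; this is because $f$ fixes $X$, so it carries $a\mathcal{U}_\alpha\subseteq X$ to itself, and a continuity/neighbourhood-trace argument identifies $\mathcal{U}_\alpha$ with $\mathcal{U}_\beta$ as open ultrafilters (the preimage under $f$ of a neighbourhood of $p_{\beta j}$ witnesses $\mathcal{U}_\beta\subseteq\mathcal{U}_\alpha$, and maximality of ultrafilters gives equality). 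Distinctness of the $\mathcal{U}_\alpha$'s then forces $\beta=\alpha$, so $f$ permutes each batch $\{p_{\alpha 1},\dots,p_{\alpha,k\alpha}\}$ among itself. In particular $f$ is a bijection. Finally, $f$ being a continuous bijection which is the identity on the dense open subspace $X$ and a permutation on the remaining finite, "isolated-from-$X$-side" points is readily seen to be open: a basic open $V$ with $p_{\alpha i}\in V$ has $f(V)=(V\cap X)\cup\{f(p_{\alpha i})\}$ which, since $f(p_{\alpha i})$ lies in the same batch and $V\cap X\in\mathcal{U}_\alpha$, is again basic open. Hence $f$ is a homeomorphism.

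The main obstacle will be the step ruling out $f(p_{\alpha i})\in X$ and, more generally, pinning down that $f$ must match batches to batches of the \emph{same} ultrafilter: this requires carefully using that $Y$ is $n$-Hausdorff (not just $X$) so that one cannot have more than $n-1$ mutually non-separated points, combined with the fact that in $Y$ the closure $a_Y\mathcal{U}_\alpha$ has grown to exactly $n-1$ points. Getting the counting exactly right — that adding the images would overflow the bound $n-1$ — is where the argument has to be executed with care, and it is essentially the same mechanism that makes $Y$ itself $n$-H-closed.
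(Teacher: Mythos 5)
Your overall skeleton is sound, and the second half of your argument is essentially correct: tracing a neighbourhood $V\ni p_{\beta j}$ back through $f$ gives $f^{\leftarrow}(V)\cap X=V\cap X\in\mathcal{U}_\alpha$ (because $f$ fixes $X$), so $\mathcal{U}_\beta\subseteq\mathcal{U}_\alpha$ and maximality yields $\beta=\alpha$; and once one knows $f$ permutes each batch, injectivity and openness on basic open sets follow just as you say. The genuine gap is your first step, where you claim $f(p_{\alpha i})=x\in X$ contradicts the $n$-Hausdorffness of $Y$ because one would get ``$n$ points \dots exceeding $n-1$.'' The count is wrong: your own trace argument places $x\in a\mathcal{U}_\alpha$, and the non-separated set you exhibit, $a\mathcal{U}_\alpha\cup\{p_{\alpha 1},\dots,p_{\alpha\,k\alpha}\}$, has exactly $|a\mathcal{U}_\alpha|+k\alpha=n-1$ elements by the definition of $k\alpha$, so no conflict with $H(Y)\leq n$ arises. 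Worse, such maps genuinely exist: if $a\mathcal{U}_\alpha\neq\emptyset$, the map fixing $X$ and sending every $p_{\alpha i}$ to a chosen $x_\alpha\in a\mathcal{U}_\alpha$ is continuous, since for any open $W\ni x_\alpha$ the trace $W\cap X$ meets every member of the open ultrafilter $\mathcal{U}_\alpha$ and hence belongs to it. So continuity plus $n$-Hausdorffness alone can never rule out $f(p_{\alpha i})\in X$; the only thing that rules it out is surjectivity, and the vague appeal you make to it does not supply the mechanism.

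The repair uses only ingredients you already have, in a different order. Your ``symmetric argument'' shows every preimage of $p_{\beta j}$ is an added point, and your ultrafilter identification shows it lies in the same batch $\{p_{\beta 1},\dots,p_{\beta\,k\beta}\}$. Surjectivity then says each point of this batch is hit by a point of the same batch, so $f$ maps each finite batch onto itself and therefore bijectively onto itself; in particular no $p_{\alpha i}$ can be sent into $X$, which is exactly the conclusion your first step was after. With that, $f$ is a bijection which is the identity on $X$ and a permutation of each batch, and for open $V$ one has $f(V)\cap X=V\cap X\in\mathcal{U}_\alpha$ whenever $f(V)$ contains a point $p_{\alpha j}$, so $f(V)$ is open and $f$ is a homeomorphism. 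In short: delete the ``$n$-Hausdorff overflow'' argument, and derive batch-preservation of preimages first, then conclude by the finite counting that surjectivity forces each batch to be permuted.
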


\begin{remark} \rm

In the class of Hausdorff spaces the function in Definition \ref{projlarg} is unique \cite{BBC}. Sometimes this is a problem in non-Hausdorff spaces. The $n$-H-closed space $Y$ constructed before for an $n$-Hausdorff space $X$ is a projective maximum, that is $Y$ is projectively larger than every $n$-H-closed extention and given a continuous surjection $f: Y\to Y$ such that $f(x)=x$ for every $x\in X$, then $f$ is a homeomorphism.  For the future we denote this $Y$ with $n\hbox{-}kX$ and we call it the \emph{n-Kat\v{e}tov extension} of $X$.
\end{remark}

Uspenski\v{i} showed in \cite{U} that for any space $X$ there exists a cardinal $\kappa$ and a nonempty subspace $Z \subseteq X^{\kappa}$ such that $X \times Z$ is homogeneous. The space $Z$ is found by selecting a set $A$ such that $\kappa=|A| \geq |X|$ and letting $Z = \{f \in \hbox{}^AX : \hbox{for each } x \in X, |f^{\leftarrow}(x)| = \kappa \}$, where $^AX$ is the space of all functions from $A$ to $X$. Both $Z$ and $X \times Z$ are homogeneous and homeomorphic. For our construction we write ${\bf H}(X) = X \times Z$ and consider $X$ as a subspace of ${\bf H}(X)$ \cite{CPR2}.

\begin{lemma} \rm \cite{CPR2}
\label{lemma1}
Let $X$ be a space and $h : X \rightarrow X$ be a homeomorphism and let $id_{Z}$ be the identity function on $Z$ . Then the function $h \times id_{Z} : {\bf H}(X) \rightarrow {\bf H}(X)$ is also a homeomorphism that extends $h$.
\end{lemma}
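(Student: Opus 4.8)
The plan is to observe that $h \times id_Z$ is nothing more than the product of the two homeomorphisms $h : X \to X$ and $id_Z : Z \to Z$, and to invoke the standard fact that a finite product of homeomorphisms between product spaces is again a homeomorphism. Concretely, I would first note that $h \times id_Z$ is a bijection of ${\bf H}(X) = X \times Z$ onto itself, with inverse $h^{-1} \times id_Z$, since $h$ and $id_Z$ are both bijections. Continuity of $h \times id_Z$ follows from the universal property of the product topology: composing $h \times id_Z$ with the projection to $X$ gives $h \circ \pi_X$ (continuous, as $h$ is), and composing with the projection to $Z$ gives $\pi_Z$ (continuous); hence $h \times id_Z$ is continuous. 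Applying the same reasoning to $h^{-1} \times id_Z$ shows the inverse is continuous, so $h \times id_Z$ is a self-homeomorphism of ${\bf H}(X)$.

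Next I would check that $h \times id_Z$ extends $h$. This requires being explicit about how $X$ is regarded as a subspace of ${\bf H}(X)$: one fixes a point $z_0 \in Z$ and identifies $X$ with the slice $X \times \{z_0\}$, which is a subspace of $X \times Z$ homeomorphic to $X$ via $x \mapsto (x,z_0)$. Since $id_Z(z_0) = z_0$, for every $x \in X$ we get $(h \times id_Z)(x,z_0) = (h(x),z_0)$, so $h \times id_Z$ carries the slice $X \times \{z_0\}$ onto itself, and under the identification its restriction to this slice is precisely $h$. Hence $h \times id_Z$ is a homeomorphism of ${\bf H}(X)$ extending $h$.

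I do not expect a genuine obstacle in this argument. The only point deserving a little care is to make the chosen embedding of $X$ as a slice explicit, because the phrase ``extends $h$'' is only meaningful relative to that identification; the rest is the routine functoriality of finite products together with the behaviour of coordinate projections.
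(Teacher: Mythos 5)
Your argument is correct and is the standard one: a product of homeomorphisms is a homeomorphism (via the projections and the universal property), and the ``extends $h$'' claim is checked on the slice $X\times\{z_0\}$ under which $X$ is identified with a subspace of ${\bf H}(X)=X\times Z$. The paper itself states this lemma as a citation from \cite{CPR2} and gives no proof, so there is nothing to diverge from; your proof matches the expected routine argument, and your care in making the slice identification explicit is exactly the right point to flag.
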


\begin{lemma} \rm
\label{lemma2}
Let $n \geq 2$, $X$ be an $n$-Hausdorff space and $h:X\to X$ be a homeomorphism. Then there is a homeomorphism $n$-$kh:n\hbox{-}kX\to n\hbox{-}kX$ that extends $h$.
\end{lemma}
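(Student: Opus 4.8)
The plan is to use the universal (projective-maximum) property of the $n$-Kat\v{e}tov extension established in Theorems~\ref{T5} and \ref{T6}. Given the homeomorphism $h : X \to X$, the natural candidate for $n\hbox{-}kh$ is obtained by transporting the construction of $n\hbox{-}kX$ along $h$: since $h$ is a homeomorphism, $\mathcal{U} \mapsto h(\mathcal{U}) = \{h(U) : U \in \mathcal{U}\}$ is a bijection of the set $\mathfrak{U}$ of open ultrafilters with $|a\mathcal{U}| < n-1$ onto itself, and it preserves the defect $k\alpha = (n-1) - |a\mathcal{U}_\alpha|$ because $h(a\mathcal{U}) = a(h(\mathcal{U}))$. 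So I would first argue that $h$ lifts to a bijection $\widetilde{h} : n\hbox{-}kX \to n\hbox{-}kX$ defined by $\widetilde{h}(x) = h(x)$ for $x \in X$ and sending the added points $p_{\alpha i}$ over $\mathcal{U}_\alpha$ bijectively to the added points over $h(\mathcal{U}_\alpha)$, and that $\widetilde{h}$ is a homeomorphism: a set $V$ is open in $n\hbox{-}kX$ iff $V \cap X$ is open in $X$ and for each added point $p_{\alpha i} \in V$ one has $V \cap X \in \mathcal{U}_\alpha$, and this condition is manifestly preserved under $\widetilde{h}$ since $h$ carries open sets to open sets and $V\cap X\in\mathcal U_\alpha\iff h(V\cap X)\in h(\mathcal U_\alpha)$.

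An alternative route, avoiding any explicit bookkeeping of the indexing, is purely formal: $n\hbox{-}kX$ is an $n$-H-closed extension of $X$; composing the inclusion $X \hookrightarrow n\hbox{-}kX$ with $h^{-1} : X \to X$ exhibits $n\hbox{-}kX$ as an $n$-H-closed extension of $X$ via a \emph{different} embedding of $X$, but after relabeling this is again an $n$-H-closed extension, so by the projective-maximum property (Theorem~\ref{T5}) there is a continuous surjection $g : n\hbox{-}kX \to n\hbox{-}kX$ with $g(x) = h(x)$ for all $x \in X$; symmetrically one gets a continuous surjection $g' : n\hbox{-}kX \to n\hbox{-}kX$ with $g'(x) = h^{-1}(x)$. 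Then $g' \circ g : n\hbox{-}kX \to n\hbox{-}kX$ is a continuous surjection fixing every point of $X$, so by Theorem~\ref{T6} it is a homeomorphism; likewise $g \circ g'$ is a homeomorphism. This forces $g$ to be a homeomorphism (it is a continuous surjection with a left inverse up to a homeomorphism, hence injective, and its inverse is continuous because $(g'\circ g)^{-1}\circ g' $ is continuous), and we set $n\hbox{-}kh = g$, which extends $h$ by construction.

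I would present the first, explicit approach as the main argument since it is constructive and transparent, and mention the formal argument as an alternative; the key verification is just that the defining condition for openness in the Kat\v{e}tov-type extension is transported correctly by $h$, together with the matching of the finite sets of added points $\{p_{\alpha i} : 1 \le i \le k\alpha\}$ over $\mathcal{U}_\alpha$ with those over $h(\mathcal{U}_\alpha)$ (their cardinalities agree because $|a(h(\mathcal U_\alpha))|=|h(a\mathcal U_\alpha)|=|a\mathcal U_\alpha|$). The main obstacle is essentially notational: one must be careful that the construction of $n\hbox{-}kX$ depends on a fixed enumeration of $\mathfrak{U}$ and a fixed choice of the new points, so $\widetilde{h}$ is not literally the identity on the added points but a bijection determined by where $h$ sends the corresponding ultrafilters; once this is set up, continuity of $\widetilde{h}$ and of $\widetilde{h}^{-1}$ (which is the lift of $h^{-1}$ by the same recipe) is immediate from the description of the topology, and the fact that $\widetilde{h}\restriction X = h$ is built in. This lemma is the last ingredient needed before assembling, via Lemma~\ref{lemma1} and the functor $\mathbf{H}(-)$, the homogeneous extension of an $n$-Hausdorff space announced as Theorem~\ref{Theorem}.
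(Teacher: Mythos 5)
Your main, explicit argument is exactly the paper's proof: transport the added points $p_{\alpha i}$ over $\mathcal{U}_\alpha$ to those over $h(\mathcal{U}_\alpha)$ (the defects match since $a(h(\mathcal{U}_\alpha))=h(a\mathcal{U}_\alpha)$), extend by $h$ on $X$, and read off continuity in both directions from the description of the basic open sets; in fact you supply more detail than the paper, which simply declares the resulting map ``clearly a homeomorphism.'' The alternative route via Theorems~\ref{T5} and \ref{T6} is a correct bonus but is not needed.
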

\begin{proof}
	Let $p\in n\hbox{-}kX\setminus X$, then $p=p_{\alpha i}$ for some $\alpha\in |\frak U|$ and for some $i=1,...,k\alpha$. The set ${\cal V} =\{h(U): U \in \mathcal{U}_\alpha\}$ is an open ultrafilter on $X$ and since $|a {\cal U}_\alpha|=|a\mathcal{V}|$, there exists $\beta \in |\frak U|$ such that $\mathcal{V}=\mathcal{U}_\beta$. Define $n\hbox{-}kh(p_{\alpha i})=p_{\beta i}$ for every $i=1,..., k\alpha=k\beta$. For $x\in X$, define $n\hbox{-}kh(x)=h(x)$. The function $n\hbox{-}kh$ is clearly a homeomorphism that extends $h$.
\end{proof}

\begin{theorem}\rm \label{Theorem}
	Let $n \geq 2$, $X$ be an $n$-Hausdorff space. Then $X$ can be embedded in an homogeneous space that is the countable union of $n$-H-closed spaces.
\end{theorem}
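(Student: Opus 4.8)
The plan is to imitate the Hausdorff construction of Carlson--Porter--Ridderbos, replacing the Katětov extension by the $n$-Katětov extension $n\hbox{-}kX$ and iterating the two operations $n\hbox{-}k(\cdot)$ and ${\bf H}(\cdot)$ alternately $\omega$ times. First I would set $X_0 = X$ and, recursively, $X_{2m+1} = n\hbox{-}k X_{2m}$ and $X_{2m+2} = {\bf H}(X_{2m+1})$, so that each $X_i$ embeds in $X_{i+1}$ as a (dense, in the odd steps) subspace. Because ${\bf H}(W)$ always contains $W$ as a subspace and $n\hbox{-}kW$ always contains $W$ as a dense subspace, I get an increasing chain $X_0 \subseteq X_1 \subseteq X_2 \subseteq \cdots$ of $n$-Hausdorff spaces; let $X_\infty = \bigcup_{i<\omega} X_i$ with the direct-limit (colimit) topology, i.e.\ $V$ is open in $X_\infty$ iff $V \cap X_i$ is open in $X_i$ for every $i$. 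Each odd-indexed $X_{2m+1}$ is $n$-H-closed, and $X_\infty = \bigcup_m X_{2m+1}$ is a countable union of $n$-H-closed spaces; it remains to check that $X_\infty$ is $n$-Hausdorff and homogeneous and that $X$ embeds in it.

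The embedding of $X$ is immediate from the construction. For $n$-Hausdorffness of $X_\infty$: given $A \subseteq X_\infty$ with $|A| \ge n$, pick $A$ inside some $X_i$ (if $A$ is finite this is automatic; here $n$ is finite so $A$ may be taken finite), use that $X_i$ is $n$-Hausdorff to get neighbourhoods in $X_i$ with empty intersection, and then it suffices to observe that these can be expanded to open sets of $X_\infty$ (or simply that $X_i$ is embedded so the relative topology suffices once one checks the empty intersection persists upward --- here one must be a little careful, which is why I would instead argue via open ultrafilters using the earlier characterization, Theorem with the $a\mathcal{U}$-condition: an open ultrafilter $\mathcal{W}$ on $X_\infty$ restricts on a cofinal set of the $X_i$'s, and $|a\mathcal{W}| \le n-1$ because each trace has that property and the colimit topology does not create new adherent points). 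The homogeneity is the heart of the matter and proceeds exactly as in the Hausdorff case: given $x,y \in X_\infty$, choose $i$ with $x,y \in X_{2i+2} = {\bf H}(X_{2i+1})$; since ${\bf H}(X_{2i+1})$ is homogeneous there is a homeomorphism $h$ of ${\bf H}(X_{2i+1})$ sending $x$ to $y$; then I extend $h$ step by step up the tower using Lemma~\ref{lemma2} (to pass from $X_{2m+1}$ to $n\hbox{-}kX_{2m+1} = X_{2m+2}$, but note the parity --- I need a homeomorphism of ${\bf H}(W)$ to extend to a homeomorphism of $n\hbox{-}k({\bf H}(W))$, which is Lemma~\ref{lemma2}) and Lemma~\ref{lemma1} (to pass from $X_{2m+1}=n\hbox{-}kX_{2m}$ to ${\bf H}(X_{2m+1})=X_{2m+2}$), obtaining a coherent sequence of homeomorphisms whose union is a homeomorphism of $X_\infty$ taking $x$ to $y$.

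I expect the main obstacle to be verifying that $X_\infty$ with the colimit topology is genuinely $n$-Hausdorff --- specifically, ruling out that the direct limit collapses separation by producing an open ultrafilter $\mathcal{W}$ with $|a\mathcal{W}| \ge n$. The key point to nail down is that for an open ultrafilter $\mathcal{W}$ on $X_\infty$, its trace on each $X_i$ extends to an open ultrafilter $\mathcal{W}_i$ on $X_i$ with $a\mathcal{W}_i \supseteq a\mathcal{W} \cap X_i$, and since $|a\mathcal{W}_i| \le n-1$ and $a\mathcal{W} = \bigcup_i (a\mathcal{W}\cap X_i)$ is an increasing union of sets of size $\le n-1$, it too has size $\le n-1$; combined with the cited equivalence (open ultrafilter condition $\iff$ $n$-Hausdorff) this finishes it. A secondary subtlety is confirming that the colimit topology on $X_\infty$ restricts to the original topology on each $X_i$ (so that "embedded" is literally correct), which follows because each inclusion $X_i \hookrightarrow X_{i+1}$ is already an embedding and a nested union of embeddings into the colimit is an embedding.
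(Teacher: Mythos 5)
Your construction is essentially the paper's: you iterate the $n$-Kat\v{e}tov extension and Uspenski\v{i}'s operation ${\bf H}(\cdot)$, take the increasing union topologized by traces on the stages, and obtain homogeneity by moving two given points with a homeomorphism of a fixed stage ${\bf H}(\cdot)$ and extending it up the tower via Lemma~\ref{lemma2} and Lemma~\ref{lemma1}, exactly as in the paper's proof of Theorem~\ref{Theorem}. The additional verification you sketch (that the limit space is itself $n$-Hausdorff) is not required by the statement, which only asks for a homogeneous extension that is a countable union of $n$-H-closed spaces, and the paper does not carry it out either.
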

\begin{proof}
	Let $H_1={\bf H}(n\hbox{-}kX)$. If $H_{m}$ is defined, let's define $H_{m+1}={\bf H}(n\hbox{-}kH_m)$ and $H=\bigcup_{m\in \Bbb N}H_m$. A subset $U\subseteq H$ is open in $H$ if $U\cap H_m\in \tau(H_m)$ for every $m\in \Bbb N$. The space $H$ is the countable union of $n$-H-closed spaces. We have to prove that $H$ is homogeneous. Let $p,q\in H$. Since $H_m\subseteq H_{m+1}$, there exists $m\in \Bbb N$ such that $p,q\in H_m$. Each $H_m$ is homogeneous, then there exist a homeomorphism $h:H_m\to H_m$ such that $h(p)=q$. By Lemma \ref{lemma2} there exists a homeomorphism $n\hbox{-}kh:n\hbox{-}kH_m\to n\hbox{-}kH_m$ that extends $h$. By Lemma \ref{lemma1} the function $n\hbox{-}kh\times id_Z: H_{m+1}\to H_{m+1}$ is a homeomorphism. Put $n\hbox{-}kh=h_1$. By induction $h$ can be extended to $h_k:H_{m+k}\to H_{m+k}$ for every $k\in {\Bbb N}$. The function $g=\bigcup_{k\in {\Bbb N}}h_k: H\to H$ extends $h$ and it is a homeomorphism on $H$. Then $H$ is homogeneous.
\end{proof}

\begin{example}\rm
	An example of an $n$-Hausdorff, homogeneous, not $n$-Urysohn space which is the countable union of $n$-H-closed spaces, for every $n\geq 2$.
\end{example}
	Let's take an $n$-Hausdorff, not $n$-Urysohn space $X$ (for example see \cite[Example 4]{B}), $n\geq 2$. Then, by Theorem \ref{Theorem}, $X$ can be embedded in an $n$-Hausdorff, homogeneous space $Y$ which is the countable union of $n$-H-closed spaces. Furthermore $Y$ is not $n$-Urysohn, since $X$ is a non-$n$-Urysohn subset of it.\hfill$\triangle$\\
\smallskip

{\bf Acknowledgement:} The research was supported by "National Group for Algebric and Geometric Structures, and their Applications" (GNSAGA-INdAM).

\end{document}